\numberwithin{equation}{section}
\newtheorem{theorem}{Theorem}[section]
\newtheorem{lemma}[theorem]{Lemma}
\newtheorem{proposition}[theorem]{Proposition}
\newtheorem{corollary}[theorem]{Corollary}
\newtheorem{claim}[theorem]{Claim}
\newtheorem{remark}[theorem]{Remark}
\newtheorem{definition}[theorem]{Definition}
\newtheorem{conjecture}{Conjecture}
\newtheorem{main}{Theorem}
\newtheorem{main-coro}{Corollary}
\newtheorem{fact}[theorem]{Fact}
\newcommand{\V}[1]{\mathbf{{#1}}}
\newcommand{\B}[1]{\mathbf{{#1}}}
\newcommand{\T}{\mathbf{T}}
\DeclareMathOperator{\codim}{codim}
\DeclareMathOperator{\ch}{char}
\DeclareMathOperator{\rank}{rank}
\DeclareMathOperator{\Frob}{Frob}
\DeclareMathOperator{\Sing}{Sing}
\DeclareMathOperator{\I}{I}
\DeclareMathOperator{\D}{D}
\DeclareMathOperator{\J}{J}
\DeclareMathOperator{\AR}{AR}
\DeclareMathOperator{\GR}{GR}
\DeclareMathOperator{\PR}{PR}
\newcommand{\ceil}[1]{\left\lceil #1 \right\rceil}
\renewcommand{\d}{\mathrm{d}}
\newcommand{\tp}{t}
\renewcommand{\phi}{\varphi}
\renewcommand{\rho}{\varrho}
\newcommand{\FF}{\mathbb{F}}
\newcommand{\NN}{\mathbb{N}}
\newcommand{\KK}{\mathbb{K}}
\newcommand{\Comp}{\mathbb{C}}
\newcommand{\norm}[1]{\lVert #1 \rVert}
\newcommand{\eps}{\epsilon}
\newcommand{\sub}{\subseteq}
\newcommand{\f}{\Phi}
\DeclareMathOperator{\F}{\mathsf{F}}
\DeclareMathOperator{\Si}{\mathsf{S}}
\DeclareMathOperator{\Op}{\mathsf{O}}
\DeclareMathOperator{\Z}{Z}
\DeclareMathOperator{\Tr}{Tr}
\newcommand{\poly}{{\sf poly}}
\DeclareMathAccent{\wtilde}{\mathord}{largesymbols}{"65}
\DeclareMathOperator*{\Exp}{\mathbb{E}}
\DeclareMathOperator{\Ex}{\mathbb{E}}
\DeclareMathOperator{\image}{im}
\DeclareMathOperator{\adj}{adj}
\newcommand{\TS}[1]{\mathsf{#1}}
\renewcommand{\sl}[1]{\mathsf{#1}}
\DeclareMathOperator{\domain}{Dom}
\newcommand{\C}[2]{\mathsf{C}^{#1}(#2)}
\renewcommand{\O}{O}
\newcommand{\CC}[3]{\mathsf{C}^{#1}_{#3}(#2)}
\newcommand{\R}[1]{\mathfrak{#1}}
\newcommand{\bigtimes}{\prod}
\DeclareMathOperator{\IP}{IP}
\title{Partition and Analytic Rank are Equivalent over Large Fields}
\author{Alex Cohen}
\address{Massachusetts Institute of Technology, Cambridge, MA 02139, USA}
\email{alexcoh@mit.edu}
\author{Guy Moshkovitz}
\address{Department of Mathematics, City University of New York (Baruch College), New York, NY 10010, USA}
\email{guymoshkov@gmail.com}
\thanks{
	Part of this research was conducted during the 2020 NYC Discrete Math REU, supported by NSF awards DMS-1802059, DMS-1851420, and DMS-1953141}
\begin{document}

\maketitle

\begin{abstract}
	We prove that the partition rank and the analytic rank of tensors are
	equal up to a constant, over finite fields of any characteristic and any 
	large enough cardinality depending on the analytic rank.
	Moreover, we show that a plausible improvement
	of our field cardinality requirement would imply that the ranks are equal up to $1+o(1)$ in the exponent over every finite field.
	At the core of the proof is a technique for lifting decompositions of multilinear polynomials in an open subset of an algebraic variety,
	and a technique for finding a large subvariety 
	that retains all rational points such that at least one of these points satisfies a finite-field analogue of genericity with respect to it.	
	Proving the equivalence between these two ranks,
	ideally over fixed finite fields,
	is a central question in additive combinatorics, 
	and was reiterated by multiple authors.
	As a corollary we prove, allowing the field to depend on the value of the norm, the Polynomial Gowers Inverse Conjecture in the $d$~vs.~$d-1$ case.
\end{abstract}

\date{}

\maketitle



%

\section{Introduction}

The interplay between the structure and randomness of polynomials is a recurrent theme in combinatorics, analysis, computer science, and other fields.
A basic question in this line of research asks: 
if a polynomial is biased---in the sense that its output distribution deviates significantly from uniform---must it be the case that it is algebraically structured, in the sense that it is a function of a small number of lower-degree polynomials?
Quantifying the trade-off between ``biased'' and ``structured'' for polynomials has been the topic of many works, and has applications to central questions in  higher-order Fourier analysis, additive combinatorics, effective algebraic geometry, number theory, coding theory, and more
(see, e.g.,~\cite{BhowmickLo15,GowersWo11,GreenTao09,KazhdanZi20B} and the references within).


Let us state the question formally.
As is often done, we will only consider polynomials that are multilinear forms; it is known that the original question for degree-$k$ polynomials reduces to the case of $k$-linear forms, by symmetrization, with little loss in the parameters and over any field $\FF$ of characteristic $\ch(\FF)>k$ (see~\cite{GowersWo11,Janzer19} or Section~\ref{subsec:Gowers-proof} below).
%
%
%
A $k$-linear form, or a \emph{$k$-tensor}, over a field $\FF$ is a function $T \colon V_1 \times\cdots\times V_k \to \FF$, with $V_i$ finite-dimensional vector spaces over $\FF$, that is separately linear in each of its $k$ arguments; 
equivalently, $T$ is a degree-$k$ homogeneous polynomial of the form
$T(\B{x}_1,\ldots,\B{x}_k) = \sum_{I=(i_1,\ldots,i_k)} T_I x_{1,i_1}\cdots x_{k,i_k}$
with $T_I \in \FF$.
One may also identify a $k$-tensor $T$ with the $k$-dimensional array $(T_I)_I \in \FF^{n_1\times\cdots\times n_k}$, or with an element of the tensor space $V_1 \otimes\cdots\otimes V_k$.
Structure and randomness for tensors are defined using the following two notions:
\begin{itemize}
\item The \emph{partition rank} $\PR(T)$ of $T$ over a field $\FF$ is the smallest $r$ such that
$T$ is a sum of $r$ tensors of the form $pq$, 
where $p$ and $q$ are lower-order tensors over $\FF$ in disjoint sets of the $k$ arguments ($p \colon \bigtimes_{i \in S} V_i \to \FF$ and $q\colon \bigtimes_{i \in T} V_i \to \FF$ for a partition $S \cup T=[k]$ with $S,T\neq\emptyset$).
%
%
%
%
%
%
%
%
%
\item The \emph{analytic rank} $\AR(T)$ of $T$ over a finite field $\FF$ is\footnote{As is standard, the expectation notation stands for the normalized sum, $\Ex_{\B{x}\in S} f(\B{x}) = \sum_{\B{x}\in S} f(\B{x})/|S|$.} $$\AR(T) =-\log_{|\FF|} \Exp_{\B{x}\in V_1\times\cdots\times V_k} \chi(T(\B{x}))$$
for $\chi$ a nontrivial additive character (the definition is independent of the choice of $\chi$).
\end{itemize}
Analytic rank was introduced by Gowers and Wolf~\cite{GowersWo11} in the context of higher-order Fourier analysis, 
and partition rank of tensors was defined by Naslund~\cite{Naslund20} in the context of the cap set problem.
The easy direction is $\AR(T) \le \PR(T)$ (e.g., \cite{KazhdanZi18,Lovett19}).
The structure-vs-randomness question is the ``inverse'' problem, asking to bound $\PR(T)$ from above in terms of $\AR(T)$, 
or in other words, to find a small decomposition of any tensor exhibiting bias.



The seminal paper of Green and Tao~\cite{GreenTao09}, 
and subsequent refinements by Kaufman and Lovett~\cite{KaufmanLo08} and by Bhowmick and Lovett~\cite{BhowmickLo15}, 
prove the qualitative bound $\PR(T) \le f_k(\AR(T))$ with $f_k(x)$ of Ackermann-type growth.
Much better bounds were obtained for specific small values of $k$ by Haramaty and Shpilka~\cite{HaramatySh10}: $\PR(T) \le \AR(T)^4$ for $k=3$, and $\PR(T) \le 2^{O(\AR(T))}$ for $k=4$; Lampert~\cite{Lampert19} improved the bound for $k=4$ to a polynomial bound.
Back to $k=3$, 
a recent result of the authors~\cite{CohenMo21} gives $\PR(T) \le 3(1+o_{|\FF|}(1))\AR(T)$ if $\FF\neq \FF_2$; Adiprasito, Kazhdan, and Ziegler~\cite{AdiprasitoKaZi21} independently obtained a similar result using related ideas.
However, the problem of obtaining a reasonable bound for $k$-tensors remained elusive, until the groundbreaking works of Mili\'{c}evi\'{c}~\cite{Milicevic19}, and of Janzer~\cite{Janzer19} for small fields, 
who obtained polynomial bounds, roughly
$\PR(T) \le (\AR(T)+1)^D$ with $D = 2^{2^{O(k^2)}}$.
%
Multiple authors (e.g., \cite{AdiprasitoKaZi21,HaramatySh10,KazhdanZi20B,Lovett19}) have asked whether---or conjectured that---
$\PR(T) = \Theta_k(\AR(T))$ for every $k$-tensor $T$;
in other words, that structure and randomness of low-degree polynomials are two sides of the same coin.
 
\begin{conjecture}[Partition~vs.~Analytic Rank Conjecture]\label{conj:equiv}
	For every $k$-tensor $T$ over any finite field, 
	$\PR(T) \le O_k(\AR(T))$.
\end{conjecture}

Conjecture~\ref{conj:equiv}
is considered far stronger than known results
and, if true, could give sharp bounds in a myriad of applications
(see, e.g.,~\cite{AnanyanHo20-B,BhowmickLo15,GowersWo11,GreenTao09} and the references within). 
Over algebraically closed fields, an analogue of this conjecture is known~\cite{Schmidt84,Schmidt85}
(see the discussion at the end of this section).
Results over algebraically closed fields can typically be translated (via model-theoretic arguments; see, e.g., Remark~4 in~\cite{Tao09} and the references within) to results over finite fields of large enough characteristic---although large enough here means 
growing with the number of variables.
Therefore, a fundamental and essential challenge  
is to prove the bound in Conjecture~\ref{conj:equiv}
over every finite field whose characteristic---or, better yet, just cardinality---is bounded from below independently of the number of variables.

%



Let us briefly mention that for many 
algebraic questions, 
proving a ``uniform'' bound---by which we mean a bound independent of the number of variables---can be notoriously difficult.
For example, Stillman's conjecture posits that any $r$ polynomials of degrees at most $k$ generate an ideal of projective dimension\footnote{Intuitively, projective dimension measures the ``depth'' of the iterated algebraic relations between the polynomials.} bounded in terms of only $r$ and $k$; or put differently, that there is a uniform proof of Hilbert’s Syzygy Theorem.
Stillman's conjecture was famously proved by Ananyan and Hochster~\cite{AnanyanHo20}, relying on a structure-vs-randomness inequality 
(see Theorem~A(a) in~\cite{AnanyanHo20}, or~\cite{ErmanSaSn19}). 
Proving a sharp version of Stillman's conjecture remains open~\cite{AnanyanHo20-B}; indeed, like many other applications, proving a sharp bound seems to require Conjecture~\ref{conj:equiv} as a prerequisite.


In this paper we prove Conjecture~\ref{conj:equiv} over every finite field whose cardinality is large enough in terms of the analytic rank (in any characteristic).
\begin{main}\label{theo:main}
	For every $k \ge 2$ and $r \ge 0$ there is $F=F(r,k)$ such that the following holds for every finite field $\FF$ of cardinality $|\FF| \ge F$.
	For every $k$-tensor $T$ over $\FF$ with $\AR(T) \le r$, 
	$$\PR(T) \le (2^{k-1}-1)\AR(T) + 1.$$
\end{main}

Our bound on the field size is double-exponential: 
$F(r,k) = 2^{2^{O(r+k)}}$. 
Although the dependence of $F$ on $r$ means that Conjecture~\ref{conj:equiv} is not yet proved for constant-size fields (if it is indeed true for such fields), 
we note that the independence of $F$ from the number of variables $N$ implies that 
if $\PR(T)$ and $k$ do not go to infinity with $N$ (which is the interesting case in most applications) 
then $\AR(T)$ does not either---as $\AR(T) \le \PR(T)$---and so
the upper bound on $\PR(T)$ in Theorem~\ref{theo:main} holds for all but boundedly many finite fields.
%
%
As for very small fields (e.g., $\FF_2$), 
we show that improving the dependence on $r$ to sub-exponential
would imply a nearly sharp bound 
without any assumption on the finite field (cardinality or characteristic). 

\begin{main-coro}\label{coro:main}
	If Theorem~\ref{theo:main} holds with $F(r,k) \le \exp(O_k(r^{o(1)}))$ then we have
	$\PR(T) \le O_k(\AR(T)^{1+o(1)})$ over every finite field.
\end{main-coro}

This should be compared with the previously best known bound, which we recall is  $\PR(T) \le O_k(\AR(T)^D)$ with $D = 2^{2^{O(k^2)}}$~\cite{Janzer19,Milicevic19}. 
In fact, even milder improvements to $F(r,k)$ could lead to an improvement of the state-of-the-art bounds over e.g.\ $\FF_2$, suggesting a potential research path in the case of very small fields.





\subsection{Polynomial bound for the $d$~vs.~$d-1$ Gowers Inverse Conjecture}

An important conjecture in 
additive combinatorics is the \emph{Polynomial Gowers Inverse Conjecture}, which posits that the inverse theorem for the Gowers uniformity norms over finite fields holds with a polynomial bound (see, e.g.,~\cite{GreenTao08,Lovett12,Wolf15}, where it was conjectured for $U^3$ norms). 
The best known bound for the $U^d$ inverse theorem over finite fields, by Gowers and Mili\'{c}evi\'{c}~\cite{GowersMi20}, is roughly an exponential tower of height $d!$.\footnote{Over the integers, Manners~\cite{Manners18} obtained a double-exponential bound.}
As a corollary of Theorem~\ref{theo:main}, we make progress towards this conjecture in the \emph{$d$~vs.~$d-1$} case, originally raised by Bogdanov and Viola~\cite{BogdanovVi10},\footnote{A closely-related conjecture is sometimes called the ``Inverse conjecture for polynomials''~\cite{Tao11-char} (or $\text{GIP}$ in~\cite{TaoZi12}).} which already suffices for some applications of the full Gowers Inverse Theorem.

For the rest of this subsection $\FF$ is a finite field, and
$\chi \colon \FF\to\Comp$ is a nontrivial additive character (e.g., the standard character $\chi(y) = (e^{2\pi i/|\FF|})^y$
for $|\FF|$ prime).
For a function $f\colon\FF^n\to\Comp$ we denote by $\Delta^*_{\B{v}}f(\B{x})=f(\B{x}+\B{v})\overline{f(\B{x})}$ the \emph{multiplicative derivative} of $f$ along $\V{v} \in \FF^n$.
The \emph{Gowers norm} $\norm{f}_{U^d}$, and the \emph{weak Gowers norm} $\norm{f}_{u^d}$, are given by
\begin{align}\label{eq:Gowers-norms}
	\begin{split}
\norm{f}_{U^d} &= |\Ex_{\B{v}_1,\ldots,\B{v}_d,\B{x}\in\FF^n} \Delta^*_{\B{v}_1}\cdots\Delta^*_{\B{v}_d} f(\B{x})|^{1/2^d}
\quad\text{ and }\quad\\
\norm{f}_{u^d} &= \max_{\deg(q)<d} |\Exp_{\B{x}\in\FF^n} f(\B{x})\chi(-q(\B{x}))|
\end{split}
\end{align}
where the maximum is over the polynomials $q \colon \FF^n \to \FF$ of degree at most $d-1$.
Thus, $\norm{f}_{U^d}$ measures the average bias of order-$d$ multiplicative derivatives of $f$, 
while $\norm{f}_{u^d}$ measures the largest correlation of $f$ with a \emph{polynomial phase function} of degree smaller than $d$.
These are indeed norms for $d\ge 2$, and seminorms otherwise.
It is known that $\norm{f}_{u^d} \le \norm{f}_{U^d} \le 1$ for every function $f \colon \FF^n \to \Comp$ with $|f(\B{x})|\le 1$ (see~\cite{GreenTao09}).
%
The Polynomial Gowers Inverse Conjecture (PGI) over finite fields, in the high characteristic case, is as follows.
\begin{conjecture}[PGI, high characteristic case]\label{conj:PGI}
	Let $d \ge 1$.
	For every function $f \colon \FF^n \to \Comp$ with $|f(\B{x})|\le 1$, over any finite field $\FF$ with $\ch(\FF) > d$, $\norm{f}_{u^d} \ge \poly_{d}(\norm{f}_{U^d})$.
\end{conjecture}

We next state the $d$~vs.~$d-1$ Gowers Inverse Conjecture of Bogdanov and Viola (Conjecture~21 in~\cite{BogdanovVi10})---now a theorem, over fields of characteristic at least $d$, following the works of Green and Tao~\cite{GreenTao09} and Bhowmick and Lovett~\cite{BhowmickLo15}.
Unlike the general case (proved in works of Bergelson, Tao, and Ziegler~\cite{BergelsonTaoZi10,TaoZi10,TaoZi12}), it involves the $d$-th Gowers norms only for polynomial phase functions whose degree is $d$.

\vskip 5pt

\paragraph{\textbf{$d$~vs.~$d-1$ Gowers Inverse Theorem over $\FF$}:}
	For every $\delta > 0$ there exists $\eps > 0$ such that for every polynomial $p\colon \FF^n\to\FF$ of degree $d$, if 
	$f(\B{x}):=\chi(p(\B{x}))$ 
	satisfies $\norm{f}_{U^d} \ge \delta$ then $\norm{f}_{u^d} \ge \eps$.\\

Conjecture~\ref{conj:PGI}, or its special cases, is of central importance.
In the $d$~vs.~$d-1$ case, quantitative bounds follow from bounds for the partition~vs.~analytic rank question (see, e.g.,~\cite{Janzer19}).
Theorem~\ref{theo:main} can be used to prove the $d$~vs.~$d-1$ inverse theorem (in the high characteristic case) with a polynomial bound.

\begin{main-coro}\label{coro:Gowers}
	Let $d \ge 1$.
	For every $\delta \ge 0$ there exists $F=F(\delta,d)\in\NN$ such that 
	for every finite field $\FF$ with $\ch(\FF)>d$ and $|\FF|\ge F$,
	and for every polynomial $p\colon \FF^n\to\FF$ of degree $d$, 
	if 
	$f(\B{x}):=\chi(p(\B{x}))$ 
	satisfies $\norm{f}_{U^d} \ge \delta$ then 
	$\norm{f}_{u^d} \ge \delta^{8^d}$.
%
%
%
%
%
\end{main-coro}

\subsection{Proof overview}\label{subsec:overview}

We begin our proof, as mathematicians often do, by
extending our notion of numbers, 
replacing the finite field $\FF$ in Conjecture~\ref{conj:equiv} with its algebraic closure $\overline{\FF}$,
which enables the use of tools from algebraic geometry. 
The main drawback of this approach, as one might assume, is that finding a small partition rank decomposition over $\overline{\FF}$---that is, one where the summands are tensors over $\overline{\FF}$---need not yield a small partition rank decomposition over $\FF$, which is what Conjecture~\ref{conj:equiv} calls for.

The analogue of analytic rank over an algebraically closed field $\overline{\FF}$ is the \emph{geometric rank}. 
%
To define it, we view a $k$-tensor $T \colon \prod_{i=1}^k\overline{\FF}^{n_i} \to \overline{\FF}$ as a $(k-1)$-linear map $\sl{T} \colon \prod_{i=1}^{k-1}\overline{\FF}^{n_i} \to \overline{\FF}^{n_k}$; this is done by considering the ``slices'' of the $k$-dimensional array $T \in \overline{\FF}^{n_1\times\cdots \times n_k}$, along the $k$-th axis, as $(k-1)$-linear forms (for $k=2$ this is the familiar correspondence between matrices and linear maps).
%
The geometric rank $\GR(T)$ is the codimension of the kernel variety of $\sl{T}$,
$$\ker\sl{T} = \Big\{ \B{x} \in \prod_{i=1}^{k-1}\overline{\FF}^{n_i} \,\Big\vert\, \sl{T}(\B{x})=\B{0} \Big\}.$$
%
Geometric rank was studied in~\cite{KoppartyMoZu20}, 
where in particular it was shown to be independent of the choice of axis to slice $T$ along; for $k=2$ this is column rank equals row rank.
(A similar geometric notion was also used in the context of number theory~\cite{Schmidt84,Schmidt85}.)
That $\GR(T)$ is the analogue of $\AR(T)$ over $\overline{\FF}$ is suggested by the identity $\AR(T) = (\sum_{i=1}^{k-1} n_i) - \log_{|\FF|}|\V{\ker\sl{T}}(\FF)|$, where $\ker\sl{T}(\FF)$ is the set of $\FF$-rational points in $\ker\sl{T}$.
Thus, 
the analogue of Conjecture~\ref{conj:equiv} over $\overline{\FF}$ asks to bound an algebraic notion of complexity---partition rank---by a geometric notion of complexity---the codimension of the kernel variety.

In order to circumvent the drawback of moving to $\overline{\FF}$, an important idea in our proof is that we not only find a partition rank decomposition for $T$ but also for a whole family of related tensors of various orders: the iterated total derivatives $\D\sl{T},\D^2\sl{T},\ldots,\D^{k-1}\sl{T}$ of $\sl{T}$ evaluated at all points in an open subset of a subvariety 
of $\ker\sl{T}$.
The total derivative $\D\sl{T}$ is given by the Jacobian matrix, meaning we replace each entry of $\sl{T}$ by the row vector of its partial derivatives.
(For example, if $\sl{T}(\B{x},\B{y})=(x_1y_1,\ldots,x_ny_n)$ then $\D\sl{T}$ is the $n \times 2n$ matrix whose $i$-th row is $y_i\B{e}_i + x_i\B{e}_{n+i}$.)
Note that $\D\sl{T}$ is a \emph{polynomial matrix}, meaning a matrix whose entries are polynomials; more generally, 
$\D^i\sl{T}$ is a \emph{polynomial $(i+1)$-tensor}.
For each derivative $\D^i \sl{T}$ we construct a rational map that takes each point $\B{x}$ in the open subset to some partition rank decomposition of the $(i+1)$-tensor $\D^i \sl{T}(\B{x})$.
This rational map into the space of partition rank decompositions, or a \emph{rational decomposition} for short, 
is obtained by looking at the tangent spaces to the subvariety at the various points of the open subset,
decomposing the total derivative into a term in the direction of the tangent vectors and a complementary term.

At the core of our result is a lifting argument, showing that if a polynomial tensor $f$ has a rational decomposition on an open subset of an irreducible variety $\V{V}$, then the total derivative $\D f$ has a rational decomposition, on an open subset of $\V{V}$, of complexity roughly bounded by the codimension of $\V{V}$ (see Theorem~\ref{theo:induction-step} for the precise statement).
Since $\D f$ is again a polynomial tensor, this enables an iterative lifting process.
One can apply this process starting with the polynomial matrix $\D\sl{T}$ and the kernel variety $\ker\sl{T}$. For the base case of this process, however, we must first obtain a rational decomposition of $\D\sl{T}$ by other means. This turns out to require a rational map computing a rank factorization of the polynomial matrix $\D\sl{T}$, which we explicitly construct.
By the end of the iterative process, we have a rational decomposition, on an open subset of $\ker\sl{T}$, for the polynomial $k$-tensor $\D^{k-1}\sl{T}$.
Since the total derivative retains all the partial derivatives throughout the iterations, and since each entry of $\sl{T}$ is a polynomial of degree $k-1$, we can reconstruct $T$ from $\D^{k-1}\sl{T}$ by evaluating the latter at any point. 
Now, one can obtain a partition rank decomposition over $\overline{\FF}$ by simply evaluating the resulting rational decomposition at any point from the open subset of $\ker\sl{T}$ on which it is defined.
This gives the following bound.

\begin{main-coro}\label{theo:main2}
	For every $k$-tensor $T$ over every algebraically closed field,
	$$\PR(T) \le (2^{k-1}-1)\GR(T).$$
\end{main-coro}




Crucially, our proof method is flexible enough to yield a partition rank decomposition over the ground finite field $\FF$. 
Instead of relying on the usual definition of the dimension of a variety $\V{V}$, we consider a variant $\dim_\FF \V{V}$ which, roughly speaking, is the largest dimension of a subvariety $\V{Z}$, with $\V{V}(\FF) \sub \V{Z} \sub \V{V}$, that contains an $\FF$-rational point satisfying certain genericity conditions.\footnote{To be more precise, this definition also depends on the set of polynomials cutting out the variety.}
Clearly, $\dim_\FF \V{V} \le \dim \V{V}$.
By applying our iterative lifting argument on an appropriate irreducible component of the subvariety witnessing $\dim_{\FF} \ker\sl{T}$, and evaluating the resulting rational decomposition at the guaranteed point, 
we obtain an upper bound on $\PR(T)$ in terms of $\codim_{\FF} \ker\sl{T}$ (see Theorem~\ref{theo:PR-bound} for the precise statement). Unlike the case where $\FF$ is algebraically closed (recall Corollary~\ref{theo:main2}), our proof does not yield an upper bound of $\PR(T)$ in terms of $\codim\ker\sl{T}=\GR(T)$;
nevertheless, this is enough for our purposes.
In the last part of the proof of Theorem~\ref{theo:main} (see Section~\ref{sec:final}) we show that for $\V{V}=\ker\sl{T}$, there is a subvariety $\V{Z}$ as above of controlled complexity. 
This is shown by iteratively applying certain operations on the top-dimensional irreducible components, 
and requires defining a variant of the usual notion of degree of a variety (see Definition~\ref{def:deg-growth}).
Since $\V{Z} \supseteq \ker\sl{T}(\FF)$, 
this allows us to deduce, over any (mildly) large finite field, a lower bound on $\AR(T)$ in terms of $\codim\V{Z} \ge \codim_{\FF} \ker\sl{T}$, thus 
yielding the chain of inequalities:
$$\PR(T) \le O_k(\codim_{\FF} \ker\sl{T}) \le O_k(\AR(T)) .$$
It is worth remarking that our arguments in Section~\ref{sec:final} do not guarantee the existence of an $\FF$-rational point on the variety $\ker\sl{T}$, but rather rely on the fact that they already exist (and not only $\B{0} \in \ker\sl{T}$, as otherwise $\AR(T)$ is maximal so there is nothing to prove),
and instead show that at least one of them is ``generic'' in some appropriate sense.
As is evident from the overview above, our proof is self-contained 
and does not rely on results from additive combinatorics, nor any regularity lemma for polynomials or notions of quasi-randomness.
This makes our proof quite different from some arguments 
previously applied in this line of research.



\subsection*{Comparison with the work of Schmidt.\,} 
Over the complex numbers, Corollary~\ref{theo:main2} (with a constant of about $k!$ instead of our $2^{k-1}-1$) can be obtained from a proof found in a 1985 paper of Schmidt~\cite{Schmidt85}.
In~\cite{Schmidt84}, Schmidt also mentions (without full proof) that his result extends to general algebraically closed fields, as in Corollary~\ref{theo:main2}. 
%
%
In~\cite{Schmidt85}, Schmidt considers the number of integer solutions of undetermined systems of polynomial equations, and gives several conditions under which such a system has roughly the ``expected'' number of solutions in axis-parallel integer boxes of varying size.
Interestingly, in the course of his proof, Schmidt defines analogues of partition rank and geometric rank, and gives an essentially algebraic proof that implies, in our terminology, the bound $\PR(T) \le O_k(\GR(T))$ over the complex numbers,
which he uses to obtain his main, number-theoretic results.
Schmidt's arguments do not give a proof of such a bound over finite fields, nor does it seem that minor modifications to Schmidt's proof would be sufficient to do so.
While both Schmidt's proof and ours utilize derivatives at a rational point, 
they differ in several key ways, which we expand upon more technically at the end of Section~\ref{sec:decomposition}. 
Our methods seem 
necessary
for obtaining results over finite fields.

\subsection*{Subsequent work.} 
Shortly after our paper appeared, Kazhdan and Polishchuk~\cite{KazhdanPo21} adapted Schmidt's proof~\cite{Schmidt85} to arbitrary fields, but their bound is weaker for fields that are not algebraically closed, and is vacuous for finite fields. 
In our terminology, they show that for a $k$-tensor $T$ over a field $\FF$, 
$\PR(T) \le C_k\cdot\codim \overline{\ker\sl{T}(\FF)}$,
where $\overline{\ker\sl{T}(\FF)}$ is the Zariski closure of the set of $\FF$-rational points in $\ker\sl{T}$
(with $C_k$ roughly $k!$, similarly to~\cite{Schmidt85}).
If $\FF$ is a finite field, the set $\ker\sl{T}(\FF)$ is finite and thus a variety of dimension $0$, yielding a trivial upper bound on $\PR(T)$ (for $T \colon \prod_{i=1}^k\FF^{n_i}\to\FF$ we get $\PR(T) \le C_k(n_1+\cdots+n_{k-1})$).

\subsection{Paper organization}
In Section~\ref{sec:constructing} we specify the space in which our decompositions live, and relate it to partition rank decompositions.
In Section~\ref{sec:total-derivative} we
 show that a total derivative of a tensor of sufficiently high order contains enough information to reconstruct the original tensor.
In Section~\ref{sec:LA} we give a rational map for a rank factorization of matrices (used in the base case of our inductive proof) and a rational map parameterizing the kernel of matrices of a given rank (used in the induction step).
In Section~\ref{sec:tangents} we obtain a rational map parameterizing the tangent space to a variety at non-singular points.
In Section~\ref{sec:decomposition} we give our inductive proof, which yields a partition rank decomposition assuming our irreducible variety is defined over $\FF$ and has an $\FF$-rational point satisfying certain non-singularity conditions. We also obtain Corollary~\ref{theo:main2} here.
In Section~\ref{sec:final} we complete the proof of Theorem~\ref{theo:main} by constructing, over large enough fields, a variety satisfying the properties above, which, moreover, can be used to bound the analytic rank of our tensor.
Moreover, we give here a proof of Corollary~\ref{coro:main}.
Finally, in Subsection~\ref{subsec:Gowers-proof} we obtain the polynomial bound for the $d$~vs.~$d-1$ inverse theorem for the Gowers norms stated in Corollary~\ref{coro:Gowers}.

\subsection{Notation}

An \emph{$\FF$-polynomial} is a polynomial with coefficients in $\FF$ (i.e., an element of $\FF[\B{x}]=\FF[x_1,\ldots,x_n]$).
A \emph{rational map} $f \colon \FF^n \dashrightarrow \FF^m$ is an $m$-tuple of rational functions over $\FF$ (i.e., quotients of $\FF$-polynomials).
A rational map determines a function whose domain $\domain(f)$ is the set of inputs for which all $m$ rational functions are defined. 
We use the convention that $f \colon \FF^n \dashrightarrow \FF^m$ may be evaluated at points $\B{x} \in \overline{\FF}^n$ over the closure, in which case $f(\B{x}) \in \overline{\FF}^m$ if $\B{x} \in\domain(f)$ ($\sub \overline{\FF}^n$).
%
%
For a positive integer $n \in \NN$ we denote $[n]=\{1,2,\ldots,n\}$.
All vectors spaces are finite dimensional.

We will need some basic algebro-geometric terminology.
We say that a variety $\V{V} \sub \overline{\FF}^n$ is \emph{defined} over $\FF$ if it can be cut out by some $\FF$-polynomials $f_1,\ldots,f_m$, meaning
$$\V{V} = 
\{ \B{x} \in \overline{\FF}^n \mid  f_1(\B{x})=\cdots=f_m(\B{x})=0\} .$$
The \emph{(vanishing) ideal} of $\V{V}$ is $\I(\V{V})=\{f \in \overline{\FF}[x_1,\ldots,x_n] \mid \forall \B{x} \in \V{V} \colon f(\B{x})=0\}$.
Any variety $\V{V}$ can be uniquely written as the union of \emph{irreducible} varieties, where a variety is said to be irreducible if it cannot be written as the union of strictly contained varieties.
The \emph{dimension} of a variety $\V{V}$, denoted $\dim\V{V}$, is the maximal length $d$ of a chain of irreducible varieties 
$\emptyset \neq \V{V}_1 \subsetneq\cdots\subsetneq \V{V}_d \subsetneq \V{V}$.
The \emph{codimension} of $\V{V} \sub \overline{\FF}^n$ is simply $\codim\V{V}=n-\dim\V{V}$.
A point $\B{x} \in \V{V}$ is said to be \emph{$\FF$-rational} if $\B{x} \in \FF^n$; we denote by $\V{V}(\FF)$ the set of $\FF$-rational points in $\V{V}$.

Crucial to our proof is the simultaneous use of different aspects of a $k$-tensor.
Henceforth, we fix bases for our vector spaces, so that we will freely pass between a vector space $V$ and its dual $V \to \FF$, 
between matrices in $V_1 \otimes V_2$ and linear maps $V_1 \to V_2$, and more generally, between $k$-tensors in $V_1 \otimes\cdots\otimes V_k$ and $k$-linear forms $V_1 \times\cdots\times V_{k} \to \FF$  or maps $V_1 \times\cdots\times V_{k-1} \to V_k$.
If we identify $V_k$ with $\FF^m$ and $V_1 \times\cdots\times V_{k-1}$ with $\FF^n$ $(m=\dim V_k,\, n=\dim V_1+\cdots+\dim V_{k-1})$,
this means that any $k$-linear form $T \colon V_1 \times\cdots\times V_k \to \FF$ corresponds to a polynomial map $\sl{T} \colon \FF^n \to \FF^m$, in fact a $(k-1)$-linear map, which we refer to as the \emph{slicing} of $T$ (along $V_k$), and denote with a slanted typeface $\sl{T}$.



\section{Constructing Space}\label{sec:constructing}

Let $\TS{V} = \bigotimes_{i=1}^k V_i$ be a space of $k$-tensors $(k \ge 2)$ over a field $\FF$.
The \emph{$r$-constructing space} $\C{r}{\TS{V}}$ over $\TS{V}$ is the vector space
$$\C{r}{\TS{V}} = \CC{r}{\TS{V}}{1} \times \CC{r}{\TS{V}}{2} =
\bigg(\bigoplus_{\emptyset \neq S \sub [k-1]} \TS{V}^{\otimes S} \bigg)^r
\times \bigg(\bigoplus_{\emptyset \neq S \sub [k-1]} \TS{V}^{\otimes \overline{S}} \bigg)^r .$$
where $\TS{V}^{\otimes X} := \bigotimes_{i \in X} V_i$ for $X \sub [k]$,\footnote{
	We henceforth identify tensor products taken in different orders, e.g., $V_1 \otimes V_2 \otimes V_3$ with $V_2 \otimes V_1 \otimes V_3$.}
and where $\overline{S}:=[k]\setminus S$.
Note that the tensor products in $\CC{r}{\TS{V}}{1}$ correspond to nonempty subsets of $[k]$ that do not contain $k$, whereas the tensor products in $\CC{r}{\TS{V}}{2}$ correspond to subsets of $[k]$ that do contain $k$.
Moreover, note that the tensor products in $\CC{r}{\TS{V}}{1}$, together with their counterparts in $\CC{r}{\TS{V}}{2}$,
correspond to unordered, non-trivial partitions $\{S,T\}$ of $[k]$ (i.e., $S \cup T = [k]$ with $S,T \neq \emptyset$), of which there are $2^{k-1}-1$.
For example, the order-$2$ and order-$3$ $r$-constructing spaces are, respectively,
$$\C{r}{V_1 \otimes V_2} = V_1^r \times V_2^r \quad\text{ and}$$
$$\C{r}{V_1 \otimes V_2 \otimes V_3} =
\Big(V_1 \oplus V_2 \oplus (V_1 \otimes V_2)\Big)^r
\times \Big((V_2 \otimes V_3) \oplus (V_1 \otimes V_3) \oplus V_3\Big)^r .
$$
Let $\IP \colon (\bigoplus_{i\in[n]} U_i) \times (\bigoplus_{i\in[n]} W_i) \to \sum_{i\in[n]} U_i \otimes W_i$, for vector spaces $U_i$ and $W_i$, be the function
\begin{equation}\label{eq:IP}
	\IP(\B{u}_1,\ldots,\B{u}_{n},\B{w}_1,\ldots,\B{w}_{n})
	= \B{u}_1 \otimes \B{w}_1 + \cdots + \B{u}_n \otimes \B{w}_n .
\end{equation}
%
Observe that applying $\IP$ on the $r$-constructing space $\C{r}{\TS{V}}$ gives a map 
into the tensor space $\TS{V}$:
$$\IP \colon (\B{u}_{j,S},\B{w}_{j,\overline{S}})_{\substack{j \in [r],\,\emptyset \neq S \sub [k-1]}}
\mapsto \sum_{\substack{j\in[r]\\\emptyset \neq S \sub [k-1]}} \B{u}_{j,S} \otimes \B{w}_{j,\overline{S}} \in \TS{V} $$
with $\B{u}_{j,S} \in \TS{V}^{\otimes S}$ and $\B{w}_{j,\overline{S}} \in  \TS{V}^{\otimes \overline{S}}$.
Since the number of sets $\emptyset \neq S \sub [k-1]$ is $2^{k-1}-1$ 
we have, immediately from definition, that if a $k$-tensor lies in the image of $\IP$ on the $r$-constructing space then its partition rank is at most $(2^{k-1}-1)r$.
\begin{fact}\label{fact:PR-image}
	Let $T \in \TS{V} = \bigotimes_{i=1}^k V_i$ 
	with $V_i$ vector spaces over $\FF$.
	If $T \in \IP(\C{r}{\TS{V}})$ then $\PR(T) \le (2^{k-1}-1)r$.
\end{fact}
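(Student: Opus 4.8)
The plan is to unwind the definition of $\IP$ on the constructing space and then invoke subadditivity of partition rank. First I would use the hypothesis $T \in \IP(\C{r}{\TS{V}})$ together with the explicit description of $\IP$ on $\C{r}{\TS{V}} = \CC{r}{\TS{V}}{1}\times\CC{r}{\TS{V}}{2}$ to write
$$T = \sum_{i\in[r]} \sum_{\emptyset \neq S \sub [k-1]} \B{u^{(i,S)}} \otimes \B{v^{(i,\overline{S})}}$$
for suitable $\B{u^{(i,S)}} \in \TS{V}^{\otimes S}$ and $\B{v^{(i,\overline{S})}} \in \TS{V}^{\otimes \overline{S}}$.

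Next I would observe that each individual summand $\B{u^{(i,S)}} \otimes \B{v^{(i,\overline{S})}}$ is, by definition, a tensor of the form $pq$ with $p = \B{u^{(i,S)}}$ a tensor in the variables indexed by $S$ and $q = \B{v^{(i,\overline{S})}}$ a tensor in the complementary variables indexed by $\overline{S} = [k]\sm S$. The one place any care is needed is checking that $\{S,\overline{S}\}$ is a genuine nontrivial partition of $[k]$: indeed $S \neq \emptyset$ by the range of the sum, while $\overline{S} \supseteq \{k\}$ is nonempty precisely because $S \sub [k-1]$. Hence each summand has partition rank at most $1$.

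Finally I would count the summands and apply subadditivity of partition rank: there are exactly $2^{k-1}-1$ nonempty subsets $S \sub [k-1]$, so the displayed expression exhibits $T$ as a sum of at most $(2^{k-1}-1)r$ tensors each of partition rank $\le 1$ (the summands that vanish contribute nothing), which gives $\PR(T) \le (2^{k-1}-1)r$. I do not anticipate a real obstacle here; the only thing to be careful about is the bookkeeping of the index set of the direct sum defining $\C{r}{\TS{V}}$ and confirming complementarity and nontriviality of the partitions, both of which are built into the definition of the constructing space.
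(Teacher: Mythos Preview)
Your proposal is correct and matches the paper's approach exactly: the paper states this as a Fact that holds ``immediately from definition,'' noting only that the number of nonempty $S \subseteq [k-1]$ is $2^{k-1}-1$, and your argument just spells out this definition-unwinding with the partition-nontriviality check made explicit.
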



\section{Total Derivative}\label{sec:total-derivative}

The \emph{total derivative} of a rational map $f=(f_1,\ldots,f_m) \colon \FF^n \dashrightarrow \FF^m$
is the rational map $\D f \colon \FF^n \dashrightarrow \FF^m \otimes \FF^n$ given by
$$\D f(\B{x}) = (\partial_j f_i(\B{x}))_{i\in[m], j\in[n]} .$$
Note that $\D f$ is a matrix whose every entry is a rational function.
The directional derivative of $f$ along $\B{v} \in \FF^n$ is thus given by
$$(\D f(\B{x}))\B{v} 
= (\partial_1 f_i(\B{x})v_1+\cdots+\partial_n f_i(\B{x})v_n)_{i\in[m]} \in \FF^m.$$
%
Observe that we may iterate the total derivative operator. For any integer $a \in \NN$, the order-$a$ total derivative of a rational map
$f=(f_1,\ldots,f_m) \colon \FF^n \dashrightarrow \FF^m$
is the rational map $\D^a f \colon  \FF^n \dashrightarrow \FF^m \otimes (\FF^n)^{\otimes a}$ given by
$$\D^a f(\B{x}) = (\partial_{j_1,\ldots,j_a} f_i(\B{x}))_{i\in[m], j_1,\ldots,j_a\in[n]} \;.$$
In particular, $\D^a f$ is an $(a+1)$-dimensional array, or $(a+1)$-tensor, whose every entry is a rational function.
%
%
%
%
We will need the following properties of the total derivative:
\begin{itemize}
	\item Sum rule: $\D(f+g) = \D f + \D g$, where $f,g \colon \FF^n \dashrightarrow \FF^m$.
	\item Product rule: $\D(fg) = g\D f + f\D g$, where $f,g \colon \FF^n \dashrightarrow \FF$.
	\item Quotient rule: $\D(f/g) = (1/g^2)(g\D f - f\D g)$, where $f,g \colon \FF^n \dashrightarrow \FF$.
	\item Chain rule: $\D(f \circ g)(\B{x}) = (\D f)(g(\B{x})) \D g(\B{x})$, 
 	where $g \colon \FF^n \dashrightarrow \FF^m$ and $f \colon \FF^m \dashrightarrow \FF^t$.
	\item The rational maps $f$ and $\D^a f$ have the same domain.
\end{itemize}
Note that if $f$ is a (single) polynomial of degree $d$ then $\D^a f$ is a polynomial $a$-tensor whose every entry is of degree $d-a$.
In particular, every entry of $\D^{d} f$ is a constant polynomial.

Recall that the slicing of a $k$-tensor $T \in \bigotimes_{i=1}^k V_i$ is a polynomial map $\sl{T} \colon \FF^{n} \to \FF^{m}$ (with $m = \dim V_k$ and
$n=\sum_{j=1}^{k-1} \dim V_j$).
Thus, for any $a \in \NN$ and $\B{x_0} \in \FF^n$, the order-$a$ total derivative
$\D^a \sl{T}(\B{x_0}) \in \FF^m \otimes (\FF^n)^{\otimes a}$ at $\B{x_0}$ is an $(a+1)$-tensor.

\begin{proposition}\label{claim:PR-restriction}
	Let $T$ be a $k$-tensor over $\FF$,
	and let $\sl{T} \colon \FF^n \to \FF^m$ be the slicing of $T$.
	For every $\B{x_0} \in \FF^n$, the $k$-tensor $T'=\D^{k-1} \sl{T}(\B{x_0})$ satisfies
	$\PR(T) \le \PR(T')$.
\end{proposition}
\begin{proof}
	We claim that the $(k-1)$-linear maps $\sl{T} \colon \FF^n \to \FF^m$ and $\sl{T'} \colon (\FF^n)^{k-1} \to \FF^m$, obtained as slicings of the $k$-tensors $T$ and $T' \in \FF^m \otimes (\FF^n)^{k-1}$, respectively, 
	satisfy the identity 
	\begin{equation}\label{eq:PR-restriction}
	\sl{T}(\B{x}_1,\ldots,\B{x}_{k-1}) = \sl{T'}((\B{x}_1,\B{0},\ldots,\B{0}),(\B{0},\B{x}_2,\ldots,\B{0}),\ldots,(\B{0},\B{0},\ldots,\B{x}_{k-1})).
	\end{equation}
	This would imply that $\PR(T) \le \PR(T')$,
	since the identity $\sl{T}(\B{x}) = \sl{T'}(\B{x'})$ in~(\ref{eq:PR-restriction}) 
	means that for every $\B{x} \in \FF^n$ and $\B{z} \in \FF^m$
	we have
	$T(\B{x},\B{z}) = \sl{T}(\B{x}) \cdot \B{z} 
	= \sl{T'}(\B{x'}) \cdot \B{z} 
	= T'(\B{x'},\B{z})$.
%
%
	
	Put $d=k-1$, and write $\sl{T} = (\sl{T}_1,\ldots,\sl{T}_m)$ with 
	$\sl{T}_\ell(\B{y}_1,\ldots,\B{y}_d) 
	= \sum_{j_1,\ldots,j_d} c_{j_1,\ldots,j_d}^{(\ell)} y_{1,j_1}\cdots y_{d,j_d}$ where $c_{j_1,\ldots,j_d}^{(\ell)} \in \FF$.
	By definition,
	$$\sl{T'}(\B{x}^{(1)},\ldots,\B{x}^{(d)})
	= \Big( \sum_{(i_1,j_1),\ldots,(i_d,j_d)} \frac{\partial^d \sl{T}_\ell(\B{x_0})}{\partial y_{i_1,j_1}\cdots\partial y_{i_d,j_d}} \, x^{(1)}_{i_1,j_1}\cdots x^{(d)}_{i_{d},j_{d}}\Big)_{\ell\in[m]}.$$
	We have
	$$\frac{\partial^d \sl{T}_\ell}{\partial y_{i_1,j_1}\cdots\partial y_{i_d,j_d}} 
	= \begin{cases}
	c_{j_{\tau^{-1}(1)},\ldots,j_{\tau^{-1}(d)}}^{(\ell)}	& i_1,\ldots,i_d \in[d] \text{ distinct}\\
	0 	& \text{otherwise} 
	\end{cases}
	$$
	where $\tau$ is the permutation $(i_1\, i_2\,\ldots\,i_d)$ (i.e., $x \mapsto i_x$).
	Setting $\B{x}^{(t)} = (\B{0},\ldots,\B{0},\B{x}_t,\B{0},\ldots,\B{0})$ for $t=1,\ldots,d$ 
	(i.e., $x^{(t)}_{i,j} = x_{i,j}\cdot \mathbbm{1}(t=i)$), 
	we get
	\begin{align*}
	\sl{T'}((\B{x}_1,\B{0},\ldots,\B{0}),\ldots,(\B{0},\ldots,\B{0},\B{x}_{d}))
	&= \Big( \sum_{j_1,\ldots,j_d} \frac{\partial^d \sl{T}_\ell(\B{x_0})}{\partial y_{1,j_1}\cdots\partial y_{d,j_d}} \, x_{1,j_1}\cdots x_{d,j_{d}}\Big)_{\ell\in[m]}\\
	&= \Big( \sum_{j_1,\ldots,j_d} c^{(\ell)}_{j_1,\ldots,j_d} \, x_{1,j_1}\cdots x_{d,j_{d}} \Big)_{\ell\in[m]}\\
	&= \sl{T}(\B{x}_1,\ldots,\B{x}_d).
	\end{align*}
	This gives~(\ref{eq:PR-restriction}) and thus completes the proof.
\end{proof}

Recall the function $\IP(\vec{\B{u}},\vec{\B{w}})\colon (\bigoplus_{i\in[n]} U_i) \times (\bigoplus_{i\in[n]} W_i) \to \sum_{i\in[n]} U_i \otimes W_i$ from~(\ref{eq:IP}),
and note that we may view it as a polynomial map by fixing bases for the vector spaces $U_i$ and $W_i$.
The next claim gives the directional derivative of $\IP$.



\begin{proposition}\label{claim:DIP}
	For every $(\vec{\B{x}},\vec{\B{y}}) \in (\bigoplus_{i\in[n]} U_i) \times (\bigoplus_{i\in[n]} W_i)$, 
	$$(\D \IP(\vec{\B{u}},\vec{\B{w}}))(\vec{\B{x}},\vec{\B{y}}) 
	= \IP(\vec{\B{u}},\vec{\B{y}}) + \IP(\vec{\B{w}}, \vec{\B{x}}).$$
	%
	%
\end{proposition}
\begin{proof}
	By linearity it suffices to prove the claim for $n=1$. 
	Viewing $\IP \colon U \times W \to U \otimes W$ as a polynomial map $\IP \colon \FF^{\dim U} \times \FF^{\dim W} \to \FF^{\dim U\times\dim W}$, we have
	$\IP(\B{u},\B{w}) = \B{u} \otimes \B{w} = (u_iw_j)_{i,j}$. 
	Thus, for every $(\B{x},\B{y}) \in \FF^{\dim U} \times \FF^{\dim W}$,
	$$(\D\IP(\B{u},\B{w}))(\B{x},\B{y}) = (w_jx_i + u_i y_j)_{i,j}
	= (w_jx_i)_{i,j} + (u_i y_j)_{i,j}
	= \IP(\B{w},\B{x}) + \IP(\B{u},\B{y}),$$
	as needed.
	%
	%
\end{proof}

\section{Rational Linear Algebra}\label{sec:LA}

For a matrix $A \in \FF^{m \times n}$, we denote by $A_{x\times y}=(A_{i,j})_{i\in[x],j\in[y]}$ the submatrix of $A$ consisting of the first $x$ rows and the first $y$ columns.
We denote by $I_r$ the $r \times r$ identity matrix.

A \emph{rank factorization} of a matrix $A \in \FF^{m \times n}$ of rank $r$ is a pair of matrices $(X,Y) \in \FF^{m \times r} \times \FF^{r \times n}$ such that $A=XY$.
We next show that rank factorization can be done via a rational map,
mapping (an open subset of) matrices of rank exactly $r$ to a rank-$r$ factorization thereof.

\begin{lemma}\label{lemma:rank-factorization}
	Let $m,n,r\le \min\{m,n\} \in \NN$ and $\FF$ any field.
	There is a rational map $F \colon \FF^{m \times n} \dashrightarrow \FF^{m \times r} \times \FF^{r \times n}$ 
	such that $F(A)=(X,Y)$ is a rank factorization of $A$, for every $A \in \domain(F) = \{ A \mid \det A_{r \times r} \neq 0 \}$ with $\rank A = r$; 
	moreover, $X=A_{m\times r}$ and $Y_{r \times r} = I_r$.
\end{lemma}
%
%
\begin{proof}
	We first prove the following matrix identity for matrices $A \in \FF^{m \times n}$ 
	with $\rank(A)=r$ and $A_{r\times r}$ invertible;
	\begin{equation}\label{eq:matrix-identity}
	A = A_{m\times r} (A_{r\times r})^{-1}A_{r\times n} .
	\end{equation}
	Since the column rank of $A$ equals $\rank(A)=r$, the first $r$ columns of $A$ are not just linearly independent but a basis for the column space.
	Thus, there is a matrix $Y \in \FF^{r \times n}$ such that
	$$A_{m \times r}Y = A .$$
	Restricting this equality to the first $r$ rows, we have
	$A_{r \times r}Y = A_{r \times n}$. 
	Since $A_{r\times r}$ is invertible, 
	$$Y = (A_{r \times r})^{-1} A_{r \times n} .$$
	Combining the above gives the identity~(\ref{eq:matrix-identity}).
	
	Let $F \colon \FF^{m \times n} \dashrightarrow \FF^{m \times r} \times \FF^{r \times n}$ be the rational map 
	$$F(A) = (A_{m\times r},\, \det(A_{r \times r})^{-1}\adj(A_{r\times r})A_{r\times n}) ,$$
	where we recall that each entry of the adjugate matrix $\adj(A_{r\times r})$ is a cofactor and thus a polynomial in the entries of $A_{r\times r}$.
	We have 
	that $(A_{r\times r})^{-1} = \det(A_{r \times r})^{-1}\adj(A_{r\times r})$.
	Thus, $\domain(F) = \{ A \mid \det A_{r \times r} \neq 0 \}$ and, 
	by~(\ref{eq:matrix-identity}), $F(A)$ is a rank factorization for any $A \in \domain(F)$ with $\rank A = r$. 
	Finally, for the ``moreover'' part, note
	that $(A_{r\times r})^{-1}A_{r\times n}$ contains as its first $r$ columns the submatrix $(A_{r\times r})^{-1}A_{r\times r} = I_r$.
\end{proof}

Recall that a matrix $P \in \FF^{n \times n}$ is a projection matrix onto a vector subspace $U \sub \FF^n$ if $P^2=P$ and $\image P=U$.
Moreover, recall that if $P$ is a projection matrix then $I_n-P$ is a projection matrix onto $\ker P$. 
Next, we show that projecting onto the kernel of a matrix can be done by a rational map.

\begin{lemma}\label{lemma:rational-projection}
	Let $m,n,r \le \min\{m,n\} \in \NN$ and $\FF$ any field.
	There is a rational map $P \colon \FF^{m \times n} \dashrightarrow \FF^{n \times n}$ 
	such that $P(A)$ is a projection matrix onto $\ker A$, 
	for every $A \in \domain(P) = \{ A \mid \det A_{r \times r} \neq 0 \}$ with $\rank A = r$; 
	moreover, 
	in the projection matrix 
	$I_n-P(A)$ all but the first $r$ rows are zero.
\end{lemma}
\begin{proof}
	For a matrix $A \in \FF^{r \times n}$, denote by $\overline{A} \in \FF^{n \times n}$ the matrix obtained from $A$ by adding $n-r$ zero rows at the bottom.
	Let $F \colon \FF^{m \times n} \dashrightarrow \FF^{m \times r} \times \FF^{r \times n}$ be the rank-factorization rational map given by Lemma~\ref{lemma:rank-factorization}.
	We take $P \colon \FF^{m \times n} \dashrightarrow \FF^{n \times n}$ 
	to be the rational map $P(A)=I_n - \overline{Y}$ where $F(A)=(X,Y)$.
	Note that $\domain(P)=\domain(F)$.
	Let $A \in \domain(P)$ with $\rank A=r$. 
	By Lemma~\ref{lemma:rank-factorization},
	the matrix $\overline{Y}$ is of the form
	\begin{equation}\label{eq:proj}
		\overline{Y} = 
		\begin{pmatrix}
			I_r & Y' \\
			\B{0} & \B{0}
		\end{pmatrix}_{n \times n} 
	\end{equation}
	for some matrix $Y' \in \FF^{r\times (n-r)}$. 
	One can easily check that $\overline{Y}^2=\overline{Y}$ and that $\ker \overline{Y} = \ker Y$. 
	We claim that $\ker Y = \ker A$; indeed, 
	$\ker Y \sub \ker A$ since $A=XY$, 
	and $\dim\ker Y = \dim\ker A$ since $\rank Y = r = \rank A$.
	We deduce that 
	$P(A) = I_n - \overline{Y}$ is a projection matrix onto $\ker\overline{Y} = \ker A$.
	This completes the proof.
\end{proof}

\section{Tangent spaces}\label{sec:tangents}

The \emph{tangent space} of a variety $\V{V} \sub \overline{\FF}^n$ at $\B{x} \in \V{V}$ is the vector space
$$\T_\B{x}\V{V} = \Big\{ \B{v} \in \overline{\FF}^n \,\Big\vert\, \forall g \in \I(\V{V}) \colon (\D g(\B{x}))\B{v} = 0 \Big\}.$$ 
Equivalently, if $\{g_1,\ldots,g_m\} \sub \overline{\FF}[x_1,\ldots,x_n]$ is a generating set for the ideal $\I(\V{V})$ then the tangent space to $\V{V}$ at $\B{x} \in \V{V}$ is the kernel
$\T_\B{x} \V{V} = \ker \J(\B{x})$, where $\J(\B{x})$ is the Jacobian matrix
$$\begin{pmatrix}
(\partial_{x_1} g_1)(\B{x}) & \cdots & (\partial_{x_n}g_1)(\B{x}) \\
\vdots  & \ddots & \vdots  \\
(\partial_{x_1} g_m)(\B{x}) & \cdots & (\partial_{x_n} g_m)(\B{x})
\end{pmatrix}_{m\times n} .$$
A point $\B{x}$ on an irreducible variety $\V{V}$ is \emph{non-singular} if
$\dim \T_\B{x} \V{V} = \dim \V{V}$.
%
We have the following basic fact about tangent spaces (see, e.g., Theorem~2.3 in~\cite{Shafarevich}).
\begin{fact}\label{fact:dim-tangent}
	For every irreducible variety $\V{V}$ and $\B{x} \in \V{V}$,
	$\dim \T_\B{x} \V{V} \ge \dim \V{V}$.
\end{fact}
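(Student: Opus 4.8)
The plan is to reduce the inequality to standard commutative algebra by identifying the tangent space at $\B{x}$ with a cotangent vector space of the local ring. First I would translate coordinates so that $\B{x}=\B{0}$, and write each $g\in\I(\V{V})$ as its linear part plus terms of order $\ge 2$; since $\I(\V{V})\subseteq\mathfrak{m}_0:=(x_1,\dots,x_n)$, the linear part of $g$ is exactly the image of $g$ under $\mathfrak{m}_0\to\mathfrak{m}_0/\mathfrak{m}_0^2\cong\overline{\FF}^n$, and the condition $(\D g(\B{0}))\B{v}=0$ is the vanishing of the pairing of $\B{v}$ with this linear part. Hence $\T_\B{0}\V{V}$ is the annihilator of the subspace $L:=(\I(\V{V})+\mathfrak{m}_0^2)/\mathfrak{m}_0^2\subseteq\mathfrak{m}_0/\mathfrak{m}_0^2$, so
$$\dim\T_\B{0}\V{V}=n-\dim_{\overline{\FF}}L=\dim_{\overline{\FF}}\big((\mathfrak{m}_0/\mathfrak{m}_0^2)/L\big)=\dim_{\overline{\FF}}(\mathfrak{m}/\mathfrak{m}^2),$$
where $\mathfrak{m}$ is the maximal ideal of the local ring $R:=\mathcal{O}_{\V{V},\B{0}}$, the last step being the isomorphism $\mathfrak{m}/\mathfrak{m}^2\cong\mathfrak{m}_0/(\mathfrak{m}_0^2+\I(\V{V}))$. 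Thus the whole statement becomes the algebraic inequality $\dim_{\overline{\FF}}(\mathfrak{m}/\mathfrak{m}^2)\ge\dim R$, together with the identity $\dim R=\dim\V{V}$.

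To finish, I would invoke two facts from commutative algebra. For $\dim R=\dim\V{V}$: since $\V{V}$ is irreducible its coordinate ring is a finitely generated domain over the perfect field $\overline{\FF}$, and for such rings every maximal ideal has height equal to the transcendence degree of the function field, i.e. $\dim\V{V}$, so $\dim R=\dim\V{V}$. For $\dim_{\overline{\FF}}(\mathfrak{m}/\mathfrak{m}^2)\ge\dim R$: Nakayama's lemma shows $\mathfrak{m}$ is generated by $t:=\dim_{\overline{\FF}}(\mathfrak{m}/\mathfrak{m}^2)$ elements, and Krull's (generalized) principal ideal theorem then bounds the height of any minimal prime over $\mathfrak{m}$ — in particular $\mathfrak{m}$ itself — by $t$, so $\dim R=\operatorname{ht}(\mathfrak{m})\le t$. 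Chaining the inequalities yields $\dim\V{V}=\dim R\le t=\dim\T_\B{0}\V{V}$.

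I would keep a second, more geometric route in mind as a cross-check. The function $\B{x}\mapsto\dim\T_\B{x}\V{V}=n-\rank\J(\B{x})$ is upper semicontinuous on $\V{V}$, because $\rank\J(\B{x})\ge r$ is an open condition; hence it attains its minimum on a dense open subset, and it is enough to exhibit one point where the value is exactly $\dim\V{V}$. Such points are the smooth points, where local equations for $\V{V}$ have Jacobian of full rank $\codim\V{V}$, and they form a dense open subset since the function field of $\V{V}$ is separably generated over the perfect field $\overline{\FF}$; Noether normalization makes this explicit via a generically separable finite map $\V{V}\to\overline{\FF}^{\dim\V{V}}$, whose differential is an isomorphism at a generic point, after which semicontinuity propagates the bound everywhere.

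The main obstacle is not a computation but the need to import two nontrivial theorems — Krull's generalized principal ideal theorem and the constancy of the height of maximal ideals in a finitely generated domain over a field. Everything else is routine: the translation, the linear-algebra identification of $\T_\B{0}\V{V}$ with the annihilator of $L$, and the isomorphism $\mathfrak{m}/\mathfrak{m}^2\cong\mathfrak{m}_0/(\mathfrak{m}_0^2+\I(\V{V}))$. Since the paper already cites Shafarevich for this statement, in the final write-up I would present the reduction above and defer these two inputs to standard references rather than reproving them.
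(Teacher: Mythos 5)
Your argument is correct. Note that the paper does not prove this fact at all---it is quoted as a standard result with a citation to Shafarevich (Theorem~2.3 there)---so any complete proof you supply is ``extra'' relative to the paper. Your main route is the standard local-algebra proof: identify $\dim \T_{\B{x}}\V{V}$ with $\dim_{\overline{\FF}}(\mathfrak{m}/\mathfrak{m}^2)$ for the local ring $\mathcal{O}_{\V{V},\B{x}}$, then chain $\dim\V{V}=\dim\mathcal{O}_{\V{V},\B{x}}\le\dim_{\overline{\FF}}(\mathfrak{m}/\mathfrak{m}^2)$ via Nakayama plus Krull's generalized principal ideal theorem, using irreducibility (so the coordinate ring is a finitely generated domain and every maximal ideal has height $\dim\V{V}$) exactly where it is needed. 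The linear-algebra identification of $\T_{\B{0}}\V{V}$ with the annihilator of $(\I(\V{V})+\mathfrak{m}_0^2)/\mathfrak{m}_0^2$ and the isomorphism $\mathfrak{m}/\mathfrak{m}^2\cong\mathfrak{m}_0/(\mathfrak{m}_0^2+\I(\V{V}))$ are both right. Your ``second route'' via upper semicontinuity of $\B{x}\mapsto n-\operatorname{rank}\J(\B{x})$ and density of smooth points is essentially the geometric argument given in the cited reference, so the two approaches complement each other: the algebraic one is shorter but imports Krull's height theorem and the dimension theory of affine domains, while the geometric one needs generic smoothness (separable generation over the algebraically closed, hence perfect, base field). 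Given that the paper treats this as a black box, deferring those two commutative-algebra inputs to standard references, as you propose, is entirely appropriate.
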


Recall that a field is \emph{perfect} if each element has a root of order $\ch(\FF)$ or if $\ch(\FF)=0$. 
In particular, finite fields, characteristic-zero fields, and algebraically closed fields are all perfect.
%
%
%
We will use the following known fact (see, e.g., Remark~21.2.7 in~\cite{FriedJa}).

\begin{fact}\label{lemma:F-generators}
	If an irreducible variety $\V{V}$ is defined over a perfect field $\FF$
	then $\I({\V{V}})$ has a (finite) generating set of $\FF$-polynomials.
\end{fact}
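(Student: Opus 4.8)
The plan is first to pin down what the statement is really asserting. Since the paper defines $\I(\V{V})$ as an ideal of $\FF[\B{x}]$, a finite generating set of $\FF$-polynomials exists trivially by Hilbert's basis theorem; what is actually needed---e.g.\ for the Jacobian description of the tangent space over $\overline{\FF}$ given just after the statement---is that such a set also generates the \emph{full} vanishing ideal $\I_{\overline{\FF}}(\V{V}):=\{g\in\overline{\FF}[\B{x}] : g|_{\V{V}}\equiv 0\}$ over $\overline{\FF}[\B{x}]$, i.e.\ that $\I(\V{V})\cdot\overline{\FF}[\B{x}]=\I_{\overline{\FF}}(\V{V})$. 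So the target is: for $\FF$ perfect, $\I_{\overline{\FF}}(\V{V})$ is generated by $\FF$-polynomials. I would prove this by Galois descent.

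The perfectness hypothesis enters exactly once: since $\FF$ is perfect, every algebraic extension of $\FF$ is separable, so $\overline{\FF}/\FF$ is a (possibly infinite) Galois extension; write $G=\operatorname{Gal}(\overline{\FF}/\FF)$, acting coordinatewise on $\overline{\FF}^n$ and through the coefficients on $\overline{\FF}[\B{x}]$. Because $\V{V}$ is cut out by $\FF$-polynomials $f_1,\dots,f_m$, the identity $f_i(\sigma\cdot\B{x})=\sigma(f_i(\B{x}))$ (valid since $f_i\in\FF[\B{x}]$) shows $\V{V}$ is $G$-stable, and then $\I_{\overline{\FF}}(\V{V})$ is $G$-stable too, as $(\sigma g)(\B{y})=\sigma(g(\sigma^{-1}\B{y}))$ and $\sigma^{-1}\B{y}\in\V{V}$ whenever $\B{y}\in\V{V}$. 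Now filter $\overline{\FF}[\B{x}]$ by degree: each $\overline{\FF}[\B{x}]_{\le d}$ is a finite-dimensional $\overline{\FF}$-space with a semilinear $G$-action, and $J_d:=\I_{\overline{\FF}}(\V{V})\cap\overline{\FF}[\B{x}]_{\le d}$ is a $G$-stable $\overline{\FF}$-subspace. The classical descent lemma (additive Hilbert~90) says that over a finite Galois extension a $G$-stable subspace is spanned by its fixed points; for the infinite extension $\overline{\FF}/\FF$ one reduces to a finite Galois subextension over which the finite-dimensional space $J_d$ is defined. Hence $J_d=(J_d\cap\FF[\B{x}])\otimes_\FF\overline{\FF}$ for all $d$, so $\I_{\overline{\FF}}(\V{V})=\I(\V{V})\otimes_\FF\overline{\FF}$ with $\I(\V{V})=\I_{\overline{\FF}}(\V{V})\cap\FF[\B{x}]$ precisely the ideal of the paper; a finite $\FF$-polynomial generating set of $\I(\V{V})$ (Hilbert basis theorem) then generates $\I_{\overline{\FF}}(\V{V})$ over $\overline{\FF}[\B{x}]$ as well. (Irreducibility of $\V{V}$ is never used; this holds for any $\FF$-defined variety over a perfect field.)

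The main obstacle is the descent step, specifically handling the infinite Galois group: the point to check carefully is that the $G$-action on each graded piece $\overline{\FF}[\B{x}]_{\le d}$ factors through a finite quotient, so that the finite-extension form of the descent lemma applies after passing to a finite Galois subfield containing the coefficients of a basis of $J_d$. It is also worth verifying the failure mode over non-perfect fields---e.g.\ $x^p-t$ over $\FF_p(t)$, whose $\FF$-vanishing ideal is $(x^p-t)$ while the full vanishing ideal is $(x-t^{1/p})$---which confirms the perfectness hypothesis is essential and is used solely through ``$\overline{\FF}/\FF$ is Galois.'' (The cited lemma of Vakil is the scheme-theoretic packaging of the same fact.)
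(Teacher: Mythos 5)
Your proposal is correct, and it is worth noting that the paper offers no proof of this fact at all: it is stated as known, with a pointer to Lemma 9.5.19 of Vakil, i.e.\ the scheme-theoretic packaging of Galois descent. Your argument is a self-contained, elementary version of exactly that descent, and it adds two genuinely useful clarifications: (i) you identify the actual content of the statement --- since the paper's $\I(\V{V})$ is literally an ideal of $\FF[\B{x}]$, the nontrivial claim is that an $\FF$-generating set of $\I(\V{V})$ generates the \emph{full} vanishing ideal $\I_{\overline{\FF}}(\V{V})\subseteq\overline{\FF}[\B{x}]$, which is precisely what the Jacobian description of $\T_{\B{x}}\V{V}$ in Lemma~\ref{lemma:rational-tangent} needs; and (ii) you isolate where perfectness enters ($\overline{\FF}/\FF$ Galois, so the fixed field is $\FF$), with the correct counterexample $x^p-t$ over $\FF_p(t)$, and you observe that irreducibility is never used. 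The descent itself (stability of $\V{V}$ and hence of $\I_{\overline{\FF}}(\V{V})$ under $G=\operatorname{Gal}(\overline{\FF}/\FF)$, degree truncations $J_d$, additive Hilbert 90 to get $J_d=(J_d\cap\FF[\B{x}])\otimes_\FF\overline{\FF}$) is the standard and correct route. One small wording fix: the $G$-action on $\overline{\FF}[\B{x}]_{\le d}$ does \emph{not} factor through a finite quotient (it is semilinear through the coefficients and faithful on $\overline{\FF}$); what makes the finite-level descent lemma applicable is, as you say in the same sentence, that the finite-dimensional subspace $J_d$ admits a basis with coefficients in a finite Galois subextension $L/\FF$, and $J_d\cap L[\B{x}]_{\le d}$ is $\operatorname{Gal}(L/\FF)$-stable; with that phrasing the argument is complete. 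Compared with citing Vakil, your proof buys self-containedness and transparency about the role of perfectness, at the cost of spelling out the (routine) reduction from the infinite to a finite Galois extension.
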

\begin{proof}[Proof sketch]
	This can be deduced from Lemma~10.2.3 in~\cite{FriedJa} (which goes back to~\cite{Lang}, page~74), which tells us that if an irreducible variety $\V{V}$ is cut out by $\FF$-polynomials then $\I(\V{V})$ can be generated by polynomials over a finite purely inseparable extension of $\FF$.
	If $\FF$ is perfect then its only purely inseparable extension is itself (see, e.g., Chapter 1.5.6 in~\cite{LutharPa04}).
	The finiteness of the generating set follows from Hilbert's basis theorem.
%
%
%
\end{proof}



%
%
%
%

We next show that over a perfect field, tangent spaces---at non-singular points in a nontrivial open subset---can be parameterized by a rational map.


\begin{lemma}\label{lemma:rational-tangent}
	Let $\V{V} \sub \overline{\FF}^n$ be an irreducible variety defined over a perfect field $\FF$,
	and $\B{x_0}$ be a non-singular 
	point on $\V{V}$.
	There is a rational map $\R{P} \colon \FF^{n} \dashrightarrow \FF^{n \times n}$ 
	that is defined at $\B{x_0}$ 
	such that $\R{P}(\B{x})$ is a projection matrix onto $\T_\B{x} \V{V}$ for every $\B{x} \in \V{V} \cap \domain(\R{P})$;
	moreover, in the projection matrix $I_n-\R{P}(\B{x})$ all but the first $\codim\V{V}$ rows are zero.
\end{lemma}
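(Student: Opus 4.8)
The plan is to reduce the statement to Lemma~\ref{lemma:rational-projection} by producing, via a rational map, a single matrix whose kernel equals $\T_\B{x}\V{V}$ and which has the right generic rank near $\B{x_0}$. First I would invoke Fact~\ref{lemma:F-generators}: since $\V{V}$ is irreducible and defined over the perfect field $\FF$, the ideal $\I(\V{V})$ has a generating set $g_1,\ldots,g_s \in \FF[x_1,\ldots,x_n]$. Consider the Jacobian matrix $\J(\B{x}) = (\partial_{x_j} g_i(\B{x}))_{i\in[s],\,j\in[n]}$; its entries are $\FF$-polynomials, so $\B{x}\mapsto\J(\B{x})$ is a polynomial (hence rational) map $\FF^n\dashrightarrow\FF^{s\times n}$. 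By the ``equivalently'' clause in the definition of the tangent space, $\T_\B{x}\V{V} = \ker\J(\B{x})$ for every $\B{x}\in\V{V}$. Set $c = \codim\V{V}$. Because $\B{x_0}$ is non-singular, $\dim\T_{\B{x_0}}\V{V} = \dim\V{V}$, so $\rank\J(\B{x_0}) = n - \dim\V{V} = c$; and by Fact~\ref{fact:dim-tangent}, $\rank\J(\B{x})\le c$ for all $\B{x}\in\V{V}$, so $c$ is exactly the maximal rank attained on $\V{V}$ and it is attained at $\B{x_0}$.

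Next I would arrange for the invertible $c\times c$ principal submatrix demanded by Lemma~\ref{lemma:rational-projection} to sit in the top-left corner at $\B{x_0}$. Since $\rank\J(\B{x_0}) = c$, there is a set of $c$ rows and $c$ columns of $\J(\B{x_0})$ whose intersection is an invertible $c\times c$ matrix. Permuting the rows of $\J$ (i.e., reordering $g_1,\ldots,g_s$) and permuting the columns (i.e., applying a fixed coordinate permutation $\sigma$ of $\overline{\FF}^n$, under which $\V{V}$ is carried to an isomorphic variety with the same dimension and tangent spaces transported by $\sigma$) we may assume this submatrix is $\J(\B{x_0})_{c\times c}$. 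Now apply Lemma~\ref{lemma:rational-projection} with $m=s$ to obtain a rational map $P\colon\FF^{s\times n}\dashrightarrow\FF^{n\times n}$ such that for any matrix $A$ of rank $c$ with $A_{c\times c}$ invertible, $P(A)$ is a projection onto $\ker A$ and $I_n - P(A)$ has all but its first $c$ rows zero. Define $\R{P} = P\circ\J$, a rational map $\FF^n\dashrightarrow\FF^{n\times n}$.

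It remains to check the two claims. The map $\R{P}$ is defined at $\B{x_0}$: $\J(\B{x_0})_{c\times c}$ is invertible, which means $\det\J(\B{x_0})_{c\times c}\neq 0$, and inspecting the proofs of Lemmas~\ref{lemma:rank-factorization} and~\ref{lemma:rational-projection} shows their rational maps have denominators that are powers of this determinant, so $\J(\B{x_0})\in\domain(P)$. For $\B{x}\in\V{V}\cap\domain(\R{P})$: being in $\domain(\R{P})$ forces $\det\J(\B{x})_{c\times c}\neq 0$, so $\rank\J(\B{x})\ge c$; combined with $\rank\J(\B{x})\le c$ from Fact~\ref{fact:dim-tangent}, we get $\rank\J(\B{x}) = c$ with $\J(\B{x})_{c\times c}$ invertible, exactly the hypothesis of Lemma~\ref{lemma:rational-projection}. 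Hence $\R{P}(\B{x}) = P(\J(\B{x}))$ is a projection onto $\ker\J(\B{x}) = \T_\B{x}\V{V}$, and $I_n - \R{P}(\B{x})$ has all but its first $c = \codim\V{V}$ rows zero, as required. The one point needing a little care — and the closest thing to an obstacle — is the bookkeeping around the coordinate permutation $\sigma$: one must make sure that the conclusion is stated in the original coordinates (so the ``first $\codim\V{V}$ rows'' really are the first ones), which is handled by conjugating $\R{P}$ by the permutation matrix of $\sigma$ and noting this is still a rational map defined at $\B{x_0}$; alternatively one fixes $\sigma$ at the outset and proves the lemma for $\sigma\cdot\V{V}$, which suffices since the statement is invariant under coordinate permutations.
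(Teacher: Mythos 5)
Your proposal is correct and follows essentially the same route as the paper: invoke Fact~\ref{lemma:F-generators} to get $\FF$-polynomial generators, form the Jacobian $\J$, use non-singularity of $\B{x_0}$ plus Fact~\ref{fact:dim-tangent} to pin the rank at $\codim\V{V}$, and compose the projection map of Lemma~\ref{lemma:rational-projection} (after suitable row/column permutations) with $\J$. Your extra care about why points of $\V{V}\cap\domain(\R{P})$ have $\rank\J(\B{x})$ exactly $\codim\V{V}$ is a welcome, slightly more explicit version of the paper's argument, not a deviation.
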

\begin{proof}
	As $\FF$ is perfect, by Fact~\ref{lemma:F-generators} the ideal $\I(\V{V})$ has a generating set of $\FF$-polynomials, $\{g_1,\ldots,g_m\} \sub \FF[\B{x}]$.
	Let $\J \colon \B{x} \mapsto (\partial_{j} g_i(\B{x}))_{i \in [m], j\in[n]}$ be the corresponding Jacobian matrix, viewed as a polynomial map $\J \colon \FF^n \to \FF^{m \times n}$.
	By definition, $\ker\J(\B{x})=\T_\B{x}\V{V}$ for every $\B{x} \in \V{V}$.
	We will take $\R{P}(\B{x})$ to be the projection map from Lemma~\ref{lemma:rational-projection} onto $\ker\J(\B{x})$, making sure it is defined at $\B{x_0}$.
	
	Put $r=\codim\V{V}$.
	Since $\B{x_0}$ is a non-singular point on $\V{V}$, we have $\rank(\J(\B{x_0})) = \codim\V{V} = r$. In particular, the submatrix $\J(\B{x_0})_{I \times J}$ is invertible for some $I \sub [m]$ and $J \sub [n]$ with $|I|=|J|=r$.
	Apply Lemma~\ref{lemma:rational-projection} with $m,n,r$, together with appropriate row and column permutations, to obtain a rational map $P \colon \FF^{m \times n} \dashrightarrow \FF^{n \times n}$ such that 
	$P(A)$ is a projection matrix onto $\ker A$, 
	for any matrix $A \in \domain(P) = \{ A \mid \det A_{I \times J} \neq 0 \}$ with $\rank A=r$.
	%
	Let $\R{P} \colon \FF^n \dashrightarrow \FF^{n \times n}$ be the rational map $\R{P} = P\circ \J$.
	By construction, $\B{x_0} \in \domain(\R{P})$, and $\R{P}(\B{x}) = P(\J(\B{x}))$ is a projection matrix onto $\ker \J(\B{x})$,
	for every $\B{x} \in \domain(\R{P})$ with $\rank \J(\B{x})=r$.
	By Fact~\ref{fact:dim-tangent}, $\rank \J(\B{x}) \le r$ for every $\B{x} \in \V{V}$.
	It follows that 
	$\R{P}(\B{x})$ is a projection matrix onto $\T_\B{x} \V{V}$ for every $\B{x} \in \V{V} \cap \domain(\R{P})$; 
	moreover, $I_n-\R{P}(\B{x})$ has zeros in all but the first $r$ rows, by Lemma~\ref{lemma:rational-projection}.
	This completes the proof.
\end{proof}

\section{Partition Rank Decomposition}\label{sec:decomposition}

Recall that a polynomial matrix is polynomial map into matrices (or a matrix whose every entry is a polynomial).
\begin{definition}[$M$-singularity]\label{def:M-sing}
	For a polynomial matrix $M \colon \FF^n \to \FF^{a \times b}$,
	we say that a point $\B{x'}$ on an irreducible variety $\V{X} \sub \overline{\FF}^n$ is \emph{$M$-singular} if $\rank M(\B{x'}) < \max_{\B{x} \in \V{X}} \rank M(\B{x})$.
\end{definition}

Note that $M$-singularity extends the usual notion of a singular point, which is obtained by taking $M$ to be the Jacobian of a generating set of $\I(\V{X})$.
In this section we prove the following bound on the partition rank---over any perfect field.
Recall that the slicing of a $k$-tensor $T$ is denoted by $\sl{T}$, 
%
and thus $\D\sl{T}$ is a polynomial matrix
(if $T \colon \FF^{n_1} \times\cdots\times \FF^{n_k} \to \FF$ 
then $\sl{T} \colon \FF^N \to \FF^{n_k}$ with $N=\sum_{i=1}^{k-1} n_i$ and $\D\sl{T} \colon \FF^N \to \FF^{n_k \times N}$).
\begin{theorem}\label{theo:PR-bound}
	Let $T$ be a $k$-tensor over a perfect field $\FF$.
	Let $\V{X} \sub \ker\sl{T}$ 
	be an irreducible variety defined over $\FF$ that has an $\FF$-rational point that is neither singular nor $\D\sl{T}$-singular on $\V{X}$.
	Then 
	$$\PR(T) \le (2^{k-1}-1)\codim\V{X}.$$
%
\end{theorem}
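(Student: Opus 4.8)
The plan is to produce, by evaluating an explicitly constructed rational map at the distinguished point, a partition-rank decomposition of $T$ over $\FF$ of size $(2^{k-1}-1)\codim\V{X}$. First I would reduce. Replacing $\V{X}$ by the irreducible component containing the given non-singular $\FF$-rational point $\B{x_0}$ — which is still defined over $\FF$, still has $\B{x_0}$ as a non-singular point, and, by the meaning of non-singularity, has the same codimension — we may assume $\V{X}$ is irreducible; put $r:=\codim\V{X}$. Fix a slicing axis $j$ and regard $\sl{T}$ as a polynomial map $\FF^n\to\FF^m$ with $n=\sum_{i\neq j}n_i$ and $m=n_j$. By Claim~\ref{claim:PR-restriction} it suffices to bound $\PR(\D^{k-1}\sl{T}(\B{x_0}))$, and by Fact~\ref{fact:PR-image} it suffices to construct a rational map $g\colon\FF^n\dashrightarrow\C{r}{\FF^m\otimes(\FF^n)^{\otimes(k-1)}}$ with coefficients in $\FF$, defined at $\B{x_0}$, with $\IP\circ g=\D^{k-1}\sl{T}$ on a nonempty (hence dense) open subset of $\V{X}$: since $\B{x_0}\in\FF^n$, evaluating $g$ there exhibits $\D^{k-1}\sl{T}(\B{x_0})$ as an element of $\IP(\C{r}{\FF^m\otimes(\FF^n)^{\otimes(k-1)}})$ over $\FF$, giving $\PR(T)\le\PR(\D^{k-1}\sl{T}(\B{x_0}))\le(2^{k-1}-1)r$.

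I would build such maps $g_d\colon\FF^n\dashrightarrow\C{r}{\FF^m\otimes(\FF^n)^{\otimes d}}$, over $\FF$, defined at $\B{x_0}$, with $\IP\circ g_d=\D^d\sl{T}$ on a dense open subset of $\V{X}$, by induction on $d=1,\dots,k-1$. For the base case $d=1$: as $\V{X}\subseteq\ker\sl{T}$, the $m$ component forms of $\sl{T}$ lie in $\I(\V{X})$, so $\T_{\B{x}}\V{X}\subseteq\ker\D\sl{T}(\B{x})$ for every $\B{x}\in\V{X}$; non-singularity of $\B{x_0}$ then gives $\rank\D\sl{T}(\B{x_0})\le\codim\V{X}=r$, and since $\B{x_0}$ is not $\D\sl{T}$-singular this rank equals $s:=\max_{\B{x}\in\V{X}}\rank\D\sl{T}(\B{x})\le r$. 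I would now apply Lemma~\ref{lemma:rank-factorization}, after the row/column permutation making the invertible $s\times s$ submatrix of $\D\sl{T}(\B{x_0})$ the leading one, to get a rational map that on the open set where that submatrix of $\D\sl{T}(\B{x})$ is invertible — a set meeting $\V{X}$ in a dense open subset on which, since $s$ is the maximal rank on $\V{X}$, $\D\sl{T}$ has rank exactly $s$ — returns a rank-$s$ factorization of $\D\sl{T}(\B{x})$; composing with $\D\sl{T}$ and padding with zeros yields $g_1$, of width $s\le r$.

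The inductive step $d\to d+1$ is the heart of the argument. I would bring in the rational tangent-space projection $\R{P}$ of Lemma~\ref{lemma:rational-tangent} — over $\FF$ (using that $\FF$ is perfect and $\V{X}$ irreducible over $\FF$), defined at $\B{x_0}$, with $\R{P}(\B{x})$ the projection onto $\T_{\B{x}}\V{X}$ for $\B{x}\in\V{X}\cap\domain(\R{P})$ and with $I_n-\R{P}(\B{x})$ supported on its first $r$ rows — and split the newest (last) derivative leg of $\D^{d+1}\sl{T}(\B{x})\in(\FF^m\otimes(\FF^n)^{\otimes d})\otimes\FF^n$ via
\[
\D^{d+1}\sl{T}(\B{x})=\D^{d+1}\sl{T}(\B{x})\,\R{P}(\B{x})+\D^{d+1}\sl{T}(\B{x})\,(I_n-\R{P}(\B{x})),
\]
where the $n\times n$ matrices act on that last leg. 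Since $I_n-\R{P}(\B{x})$ has only $r$ nonzero rows, the second summand equals $\sum_{l=1}^{r}\partial_l(\D^d\sl{T})(\B{x})\otimes\rho_l(\B{x})$, with $\rho_l(\B{x})$ the $l$-th row of $(I_n-\R{P}(\B{x}))_{r\times n}$ and $\partial_l(\D^d\sl{T})(\B{x})\in\FF^m\otimes(\FF^n)^{\otimes d}$: a rational, width-$r$ decomposition realizing exactly the partition type ``all old legs versus the newest leg''. For the first summand I would use that the rational map $\D^d\sl{T}-\IP\circ g_d$ vanishes on $\V{X}$, hence — writing each of its components as $p/q$ with $p\in\I(\V{X})$ and $q$ nonvanishing on $\V{X}$, and combining the quotient rule with the definition of the tangent space — all of its partial derivatives annihilate $\T_{\B{x}}\V{X}$ at each $\B{x}\in\V{X}$, so $\D^{d+1}\sl{T}(\B{x})\,\R{P}(\B{x})=\D(\IP\circ g_d)(\B{x})\,\R{P}(\B{x})$ on $\V{X}$. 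Expanding the right-hand side by the product rule for the bilinear map $\IP$, namely $\D(\IP(g_d^{(1)},g_d^{(2)}))=\IP(\D g_d^{(1)},g_d^{(2)})+\IP(g_d^{(1)},\D g_d^{(2)})$, and then re-applying $\R{P}(\B{x})$ to the new leg, gives width-$r$ rational decompositions of the first summand for all partition types of $\FF^m\otimes(\FF^n)^{\otimes(d+1)}$ other than ``all old legs versus the newest leg'': the combinatorial point is that under $\D$ a type $S\subseteq[d]$ of $\D^d\sl{T}$ contributes exactly the two types $S$ and $[d+1]\setminus S$ of $\D^{d+1}\sl{T}$ (the derivative leg joining the $q$-side, respectively the $p$-side), and as $S$ runs over the nonempty subsets of $[d]$ these sweep out all $2^{d+1}-1$ types except $S=[d+1]$. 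Adding the two contributions assembles $g_{d+1}$; taking $d=k-1$ gives the desired $g$, and the theorem follows as explained.

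\emph{Main obstacle.} The real work is all in the inductive step, on three fronts: (i) the precise combinatorics — showing that differentiating a width-$r$ decomposition of $\D^d\sl{T}$ produces width-$r$ decompositions of \emph{all but exactly one} partition type of $\D^{d+1}\sl{T}$, and that the missing type is supplied, with exactly $r=\codim\V{X}$ summands, by the complementary projection $I_n-\R{P}$; (ii) justifying $\D^{d+1}\sl{T}\cdot\R{P}=\D(\IP\circ g_d)\cdot\R{P}$ on $\V{X}$ rigorously for \emph{rational} (not merely polynomial) maps, through the tangent-space description; and (iii) the bookkeeping — that all domains contain $\B{x_0}$, that all coefficients stay in $\FF$, and that the leg-reorderings forced by the product rule are harmless — so that the final evaluation at $\B{x_0}$ genuinely lands in $\C{r}{\cdot}$ over $\FF$.
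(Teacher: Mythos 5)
Your proposal is correct and follows essentially the same route as the paper's proof: rank-factorize the polynomial matrix $\D\sl{T}$ (Lemma~\ref{lemma:rank-factorization}) to get the base rational decomposition of width at most $\codim\V{X}$, then iterate the tangent-space splitting $\D^{d+1}\sl{T}=\D^{d+1}\sl{T}\,\R{P}+\D^{d+1}\sl{T}\,(I_n-\R{P})$ with the chain/product rule and the vanishing of tangential derivatives of $\D^{d}\sl{T}-\IP\circ g_d$ on $\V{X}$ (this is exactly Theorem~\ref{theo:induction-step} and Corollary~\ref{coro:induction-step}), and finish via Claim~\ref{claim:PR-restriction} and Fact~\ref{fact:PR-image} by evaluating at the $\FF$-rational point $\B{x_0}$. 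The only deviation is your opening reduction to the irreducible component through $\B{x_0}$, whose assertions (defined over $\FF$, same codimension) are stated without proof and really encode the implicit irreducibility hypothesis that the paper itself assumes in its proof and arranges in its application; this is a peripheral matter of hypotheses, not a gap in the main argument.
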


When the field $\FF$ in Theorem~\ref{theo:PR-bound} is algebraically closed, taking $\V{X}$ to be any top-dimensional irreducible component of $\ker\sl{T}$, together with a generic point on $\V{X}$, immediately gives the bound $\PR(T) \le (2^{k-1}-1)\GR(T)$, proving Corollary~\ref{theo:main2}.

The proof of Theorem~\ref{theo:PR-bound} starts by
obtaining a rational decomposition for the polynomial matrix $\D\sl{T}$ using the rank-factorization map from Lemma~\ref{lemma:rank-factorization}, then proceeds to iteratively construct a rational decomposition for each of $\D^2\sl{T},\D^3 \sl{T},\ldots,\D^{k-1} \sl{T}$ on an open subset of $\V{X}$---by parameterizing the tangent spaces of $\V{X}$ and taking derivatives along them and along their complementary subspaces---and finally evaluates the rational decomposition at the guaranteed 
$\FF$-rational point to obtain a partition rank decomposition over $\FF$ for $T$ itself.


The following theorem gives the inductive step in the proof of Theorem~\ref{theo:PR-bound}.
For rational maps $f$ and $g$ we write $f|_{\V{V}} = g|_{\V{V}}$ if $f(\B{x})=g(\B{x})$ for every $\B{x} \in \V{V}$ for which $f(\B{x})$ and $g(\B{x})$ are defined.


\begin{theorem}\label{theo:induction-step}
	Let $\V{V} \sub \overline{\FF}^n$ be an irreducible variety defined over a perfect field $\FF$, $\B{x_0}$ a non-singular point on $\V{V}$, 
	and $f \colon \FF^n \to \bigotimes_{i=1}^k V_i =: \TS{V}$ 
	a polynomial map with $k\ge 2$.
	%
	If there exists a rational map $H \colon \FF^n \dashrightarrow \C{r}{\TS{V}}$ defined at $\B{x_0}$ 
	such that $f|_\V{V} = \IP\circ H|_\V{V}$ 
	then there exists a rational map $H' \colon \FF^n \dashrightarrow \C{s}{\TS{V}\otimes \FF^n}$
	defined at $\B{x_0}$
	such that $(\D f)|_\V{V} = \IP\circ H'|_\V{V}$
	with $s = \max\{r,\,\codim \V{V}\}$.
\end{theorem}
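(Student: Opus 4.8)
The plan is to differentiate the given identity $f|_{\V{V}} = \IP \circ H|_{\V{V}}$ using the product rule for the total derivative, and then repackage the result as a new rational decomposition, trading in the cost of parameterizing the tangent space. Write $H = (H_1, H_2)$ according to the identification $\C{r}{\TS{V}} = \CC{r}{\TS{V}}{1} \times \CC{r}{\TS{V}}{2}$, so that on $\V{V}$ we have $f = \sum_{i \in [r]} \sum_{\emptyset \neq S \sub [k-1]} \B{u^{(i,S)}} \otimes \B{v^{(i,\overline{S})}}$, where $\B{u^{(i,S)}}$ and $\B{v^{(i,\overline{S})}}$ are the rational-map components of $H_1$ and $H_2$. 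The issue is that this identity holds only on $\V{V}$, not on all of $\FF^n$, so I cannot simply apply $\D$ to both sides. Instead I first compose with a parameterization of the tangent space: by Lemma~\ref{lemma:rational-tangent}, since $\FF$ is perfect and $\B{x_0}$ is non-singular, there is a rational map $\R{P} \colon \FF^n \dashrightarrow \FF^{n \times n}$, defined at $\B{x_0}$, sending each $\B{x} \in \V{V} \cap \domain(\R{P})$ to a projection matrix onto $\T_{\B{x}} \V{V}$, with $I_n - \R{P}(\B{x})$ supported on its first $\codim \V{V}$ rows.

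Next I contract $\D f$ with $\R{P}(\B{x})$: on $\V{V}$, differentiating the identity $f = \IP \circ H$ along any tangent vector $\B{v} \in \T_{\B{x}}\V{V}$ gives, by the sum and product rules,
$$(\D f(\B{x}))\B{v} = \sum_{i,S} \big( \B{u^{(i,S)}}(\B{x}) \otimes (\D\B{v^{(i,\overline{S})}}(\B{x}))\B{v} + ((\D\B{u^{(i,S)}}(\B{x}))\B{v}) \otimes \B{v^{(i,\overline{S})}}(\B{x}) \big).$$
Crucially, because $f$ is a polynomial map and $f|_{\V{V}} = \IP \circ H|_{\V{V}}$, the function $\B{x} \mapsto (\D f(\B{x}))\R{P}(\B{x})$ agrees with $\D f(\B{x})$ as a map into $\TS{V} \otimes \FF^n$ for $\B{x} \in \V{V}$ — this is the standard fact that the derivative of a polynomial restricted to a variety is determined by its restriction to the tangent space, so that $\D f(\B{x}) = (\D f(\B{x}))\R{P}(\B{x}) + (\D f(\B{x}))(I_n - \R{P}(\B{x}))$ and the second term is handled separately (it contributes only $\codim \V{V}$ nonzero "columns" in the $\FF^n$ factor, since $I_n - \R{P}(\B{x})$ has only $\codim \V{V}$ nonzero rows, and for each such coordinate direction $\partial_j f(\B{x})$ is itself a polynomial $(d)$-tensor lying — together with its own decomposition — back in the constructing space by reapplying the hypothesis, or more cheaply, each such partial derivative $\partial_j f$ is a single tensor of partition rank at most... no: each contributes one rank-one-across-a-partition summand per $S$). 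Here I need to be careful: the $I_n - \R{P}$ part gives $\codim \V{V}$ terms, each of which is (a tensor) $\otimes e_j$; putting such a term into the constructing space $\C{\cdot}{\TS{V} \otimes \FF^n}$ costs one unit of $r$ per $j$, hence $\codim \V{V}$ units total. The $\R{P}$ part, expanded via the product rule as above, gives for each $(i,S)$ two terms: $\B{u^{(i,S)}} \otimes \big((\D \B{v^{(i,\overline{S})}})\R{P}\big)$, which is a vector in $\TS{V}^{\otimes S}$ tensored with an element of $\TS{V}^{\otimes \overline{S}} \otimes \FF^n$ — a valid split of $\TS{V} \otimes \FF^n$ placing the new $\FF^n$ factor on the $\overline{S}$ side — and symmetrically $\big((\D\B{u^{(i,S)}})\R{P}\big) \otimes \B{v^{(i,\overline{S})}}$ with the $\FF^n$ on the $S$ side.

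Assembling: the $\R{P}$-part contributes at most $r$ units (the two product-rule terms for each $i$ can be absorbed into a single index $i \in [r]$ by noting they use complementary placements of the new factor and can be summed within the same $r$-indexed slot — or, more safely, they contribute $r$ terms which, across all $S$, fit into the $r$-fold direct sum structure since for each fixed $i$ we are choosing, per partition $\{S,\overline{S}\}$ of $[k]$, a new partition of $[k+1]$ by appending the new coordinate to one side), and the $(I_n - \R{P})$-part contributes at most $\codim \V{V}$ units. Hence a rational map $H' \colon \FF^n \dashrightarrow \C{s}{\TS{V} \otimes \FF^n}$ with $s = \max\{r, \codim \V{V}\}$, defined at $\B{x_0}$ (as a composition and product of maps each defined there: $H$ is defined at $\B{x_0}$, $\R{P}$ is defined at $\B{x_0}$, and total derivatives preserve domains), and by construction $\IP \circ H'|_{\V{V}} = \D f|_{\V{V}}$. \textbf{The main obstacle} I anticipate is the bookkeeping that makes $s = \max\{r, \codim \V{V}\}$ rather than $r + \codim \V{V}$: one must observe that the product-rule terms $\B{u} \otimes \D\B{v}$ and $\D\B{u} \otimes \B{v}$ for a fixed summand index $i$ attach the new $\FF^n$ factor to opposite sides of the partition, so they land in different direct summands of $\C{\cdot}{\TS{V}\otimes\FF^n}$ and can share the index $i \in [r]$; and separately that the derivative of a polynomial $f$ along the normal directions $I_n - \R{P}(\B{x})$ is exactly captured by the $\codim \V{V}$ nonzero rows, each of which can be decomposed by reusing $H$ composed with the projection, keeping the count at $\codim \V{V}$ — reconciling these two bounds into a single $\max$ is where the argument must be written with care.
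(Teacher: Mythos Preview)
Your plan is exactly the paper's: decompose $\D f = (\D f)\R{P} + (\D f)Q$ with $Q := I_n - \R{P}$, expand the tangential piece via the chain rule on $\IP\circ H$ (this needs the short argument that each numerator of $f - \IP\circ H$ lies in $\I(\V{V})$, so that tangential directional derivatives vanish---your ``standard fact'' glosses over $H$ being rational rather than polynomial), and handle the normal piece using the low rank of $Q$. Where you hesitate the paper is explicit, and your tentative suggestions there are off: the $Q$-part is \emph{not} handled by ``reusing $H$'', and ``$(\text{tensor})\otimes e_j$'' would give $n$ terms; instead one uses idempotence $Q = Q^2 = \sum_{i\le\codim\V{V}} Q_i\otimes(Q^\tp)_i$ (the sum truncates since all but the first $\codim\V{V}$ rows of $Q$ vanish) to get $(\D f)Q = \sum_i ((\D f)Q_i)\otimes(Q^\tp)_i$, each summand an element of $\TS{V}$ tensored with a vector in $\FF^n$, hence living in the direct summand of $\C{\cdot}{\TS{V}\otimes\FF^n}$ indexed by $S=[k]$---and now the $\max$ (rather than $r+\codim\V{V}$) is immediate, because this new summand $S=[k]$ is disjoint from the summands $S\subsetneq[k]$ occupied by the two product-rule halves of the $\R{P}$-part, so the three families share the index set $[s]$ with $s=\max\{r,\codim\V{V}\}$.
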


Theorem~\ref{theo:induction-step} can be illustrated using the commutative diagrams:
$$
\begin{tikzcd}
\V{V} \arrow[r, dashrightarrow, "H"] \arrow[d, hook] & \C{r}{\TS{V}} \arrow[d, "\IP"] \\
\overline{\FF}^n \arrow[r, "f"] & \TS{V}
\end{tikzcd}
\quad\Longrightarrow\quad
\begin{tikzcd}
\V{V} \arrow[r, dashrightarrow, "H'"] \arrow[d, hook] & \C{s}{\TS{V}\otimes \FF^n} \arrow[d, "\IP"] \\
\overline{\FF}^n \arrow[r, "\D f"] & \TS{V\otimes \FF^n} 
\end{tikzcd}
$$
The proof of Theorem~\ref{theo:induction-step} proceeds by splitting the derivative $\D f$ into two parts: a main term which is mapped into the portion of the constructing space on $\TS{V}\otimes \FF^n$ where each product involves a tensor ``mixed'' between $\TS{V}$ and $\FF^n$,
and a remainder term---controlled by the codimension of $\V{V}$---which is mapped into the complementary portion of the constructing space where each product involves a tensor in $\TS{V}$ and a $1$-tensor in $\FF^n$.
We note that the curvature of the variety $\V{V}$ enters into the partition rank decomposition---specifically, the remainder term---through the derivative along complementary subspaces to the tangent spaces.

Before going into the proof, 
we remind the reader that if $g \colon \FF^n \dashrightarrow \TS{V}$ is a rational map into a space $\TS{V}$ of $k$-tensors, 
then 
the total derivative $\D g(\B{x}) \in \TS{V} \otimes \FF^n$ is a $(k+1)$-tensor, while the directional derivative of $g$ along any vector $\B{v}\in \FF^n$ is again a $k$-tensor, $(\D g(\B{x}))\B{v} \in \TS{V}$.
We refer the reader to Remark~\ref{remark:boring} below for an instructive example application of Theorem~\ref{theo:induction-step}.


\begin{proof}[Proof of Theorem~\ref{theo:induction-step}]
	For $\B{h} \in \C{r}{\TS{V}}$ we henceforth write $\B{h}=(\B{h}_1,\B{h}_2) \in \CC{r}{\TS{V}}{1} \times \CC{r}{\TS{V}}{2}$.
	Using the chain rule for the total derivative, we obtain the identity
	\begin{align}
		\begin{split}\label{eq:chain}
			\D(\IP\circ H)(\B{x})
			&= (\D \IP)(H(\B{x}))\D H(\B{x})\\
			&= (\D \IP)(H_1(\B{x}),H_2(\B{x}))(\D H_1(\B{x}), \D H_2(\B{x}))\\
			&= \IP(H_1(\B{x}),\D H_2(\B{x})) + \IP(H_2(\B{x}),\D H_1(\B{x})) ,
		\end{split}
	\end{align}
	where the last equality follows from Proposition~\ref{claim:DIP}. 
	
	Put $\O = \V{V} \cap \domain(H)$. 
	Since $\B{x_0} \in \O$, we have that $\O$ is a non-empty open subset of the irreducible variety $\V{V}$, and thus its Zariski closure is $\overline{\O}=\V{V}$.
	%
	Taking the total derivative at any $\B{x} \in \O$ along a tangent vector $\B{v} \in \T_\B{x}\V{V}$, 
	we claim that
	\begin{equation}\label{eq:D-tangent}
	(\D f(\B{x}))\B{v} = (\D\IP\circ H(\B{x}))\B{v}.
	\end{equation}
	Indeed, if $p/q$ ($p,q$ are $\FF$-polynomials) is any one of the entries of $f - \IP\circ H$ then, since $(f - \IP\circ H)(\O)=0$, we have $p(\O)=0$.
	Thus, $p(\V{V})=p(\overline{\O})=0$.
	Since $(\D p(\B{x}))\B{v}=0$ by the definition of $\T_\B{x}\V{V}$, and since
	$$\D\frac{p}{q}(\B{x}) 
	= \frac{1}{q(\B{x})^2}(q(\B{x})\D p(\B{x}) - p(\B{x})\D q(\B{x}))
	= \frac{1}{q(\B{x})}\D p(\B{x}),$$	
	we have $(\D(p/q)(\B{x}))\B{v}=0$, and so 
	$(\D(f - \IP\circ H)(\B{x}))\B{v}=0$, verifying~(\ref{eq:D-tangent}).

	Apply Lemma~\ref{lemma:rational-tangent}, with $\V{V}$ and the non-singular $\B{x_0}$, to obtain a rational map $P \colon \FF^{n} \dashrightarrow \FF^{n \times n}$ 
	that is defined at $\B{x_0}$, such that $P(\B{x})$ is a projection matrix onto $\T_\B{x} \V{V}$ for every $\B{x} \in \V{V} \cap \domain(P)$.
	Let $Q(\B{x}) = I_n - P(\B{x})$. 
	Put $\O' = \V{V} \cap \domain(H) \cap \domain(P)$, and note that $\B{x_0} \in \O'$.
	Let $A_i$ denote column $i$ of a matrix $A$.
	We deduce that for every $\B{x} \in \O'$ we have
	\begin{align}
	\begin{split}\label{eq:D-decomposition}
	\D f(\B{x})
	&= (\D f(\B{x}))(P(\B{x}) + Q(\B{x}))
	= (\D\IP \circ H(\B{x}))P(\B{x}) + (\D f(\B{x})) Q(\B{x})\\
	&= \IP(H_1(\B{x}),(\D H_2(\B{x}))P(\B{x})) 
	+ \IP(H_2(\B{x}),(\D H_1(\B{x}))P(\B{x})) \\
	&+ \sum_{i=1}^{\codim\V{V}} (\D f(\B{x}))Q(\B{x})_i \otimes Q^\tp(\B{x})_i
	\end{split}
	\end{align}
	where the second equality follows from~(\ref{eq:D-tangent}),
	and the last equality applies~(\ref{eq:chain}) and, moreover, uses the decomposition of the projection matrix $Q=Q(\B{x})$ as (recall Lemma~\ref{lemma:rational-tangent})
	$Q = Q^2 
	= \sum_{i=1}^{\codim\V{V}} Q_i \otimes (Q^\tp)_i$. 

	Put $r'=\codim\V{V}$. 
	We will now use the rational map $H$,
	which maps to the $k$-constructing space $\C{r}{\TS{V}}$, 
	in order to construct a rational map $H'$ to the $(k+1)$-constructing space $\C{r}{\TS{V} \otimes V_{k+1}}$, where $V_{k+1}$ is a vector space isomorphic to $\FF^n$.
	Let $H'(\B{x}) = (H'_1(\B{x}), H'_2(\B{x}))$ be given by
	$$H'_1(\B{x}) = H_1(\B{x}) \oplus H_2(\B{x}) \oplus ((\D f(\B{x}))Q(\B{x})_i)_{i=1}^{r'}\, , $$
	$$H'_2(\B{x}) = (\D H_2(\B{x}))P(\B{x}) \oplus (\D H_1(\B{x}))P(\B{x}) \oplus (Q^\tp(\B{x})_i)_{i=1}^{r'}.\footnote{The number of tensor products involved in $H'_1$ (and in $H'_2$) is twice the number in $H_1$ plus $1$, that is, $2(2^{k-1}-1)+1=2^k-1$.}$$
	Note that for every $\B{x} \in \O' \cap \FF^n$ we have
	$$H'_1(\B{x}) \in \Big(\bigoplus_{\emptyset \neq S \sub [k-1]} (\TS{V}^{\otimes S})^r \oplus \bigoplus_{\emptyset \neq S \sub [k-1]} (\TS{V}^{\otimes [k]\setminus S})^r \oplus (\TS{V}^{\otimes [k]})^{r'} \Big) \;,$$
	$$H'_2(\B{x}) \in \Big( \bigoplus_{\emptyset \neq S \sub [k-1]} (\TS{V}^{\otimes [k]\setminus S} \otimes V_{k+1})^r \oplus \bigoplus_{\emptyset \neq S \sub [k-1]} (\TS{V}^{\otimes S} \otimes V_{k+1})^r \oplus (V_{k+1})^{r'} \Big) .
	$$
	Put $\TS{U} = \bigotimes_{i=1}^{k+1} V_i = \TS{V} \otimes V_{k+1}$.
	Observe that $H'_1(\B{x})$ only involves tensor products that do not include $V_{k+1}$, whereas $H'_2(\B{x})$ only involves tensor products that do include $V_{k+1}$.
	Therefore
	$$
	H'(\B{x}) \in \bigg(\bigoplus_{\emptyset \neq S \sub [k]} \big(\TS{U}^{\otimes S}\big)^{\max\{r,r'\}} \bigg)
	\times \bigg(\bigoplus_{\emptyset \neq S \sub [k]} \big(\TS{U}^{\otimes [k+1]\setminus S}\big)^{\max\{r,r'\}} \bigg)
	= \CC{s}{\TS{U}}{1} \times \CC{s}{\TS{U}}{2} .
	$$
	Thus, $H' \colon \FF^n \dashrightarrow \C{s}{\TS{U}}$ is a rational map 
	with $\domain(H') = \domain(H) \cap \domain(P)$.
	We deduce that $H'$ is defined at $\B{x_0}$ and, by~(\ref{eq:D-decomposition}), satisfies for every $\B{x} \in \V{V} \cap \domain(H')$ that
	$$\D f(\B{x}) = \IP(H'(\B{x})) .$$
	This completes the proof.
\end{proof}

We note that Theorem~\ref{theo:induction-step} obtains a rational decomposition for the restriction of the derivative $(\D f)|_{\V{V}}$, rather than for the derivative of the restriction $\D(f|_{\V{V}})$,
and so iterating it does not lead to losing any information about $f$---even if, for example, $(\D^i f)|_{\V{V}}=0$ for some $i$.
Iterating Theorem~\ref{theo:induction-step} gives the following.
	
\begin{corollary}\label{coro:induction-step}
	Let $\V{V} \sub \overline{\FF}^n$ be an irreducible variety defined over a perfect field $\FF$, $\B{x_0}$ a non-singular point on $\V{V}$,
	and $M \colon \FF^n \dashrightarrow A \otimes B$ a polynomial matrix.
	If there exists a rational map $H \colon \FF^n \dashrightarrow \C{r}{A \otimes B}$ 
	defined at $\B{x_0}$ such that
	$M|_{\V{V}} = \IP\circ H|_{\V{V}}$ 
	then, for every $a \in \NN$, there exists a rational map $H' \colon \FF^n \dashrightarrow \C{s}{A \otimes B \otimes (\FF^n)^{\otimes a}}$
	defined at $\B{x_0}$ such that
	$(\D^a M)|_{\V{V}} = \IP\circ H'|_{\V{V}}$
	with $s = \max\{r,\,\codim \V{V}\}$.
\end{corollary}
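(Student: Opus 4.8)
The plan is to deduce Corollary~\ref{coro:induction-step} from Theorem~\ref{theo:induction-step} by a straightforward induction on $a$, tensoring in one extra copy of $\FF^n$ at each step. Write $\TS{W}_a := A \otimes B \otimes (\FF^n)^{\otimes a}$ and set $s := \max\{r, \codim\V{V}\}$. For the base case $a = 0$ we have $\D^0 f = f$ and $\TS{W}_0 = A \otimes B$, so the hypothesis already gives a rational map $H \colon \FF^n \dashrightarrow \C{r}{A \otimes B}$ defined at $\B{x_0}$ with $f|_\V{V} = \IP \circ H|_\V{V}$; composing $H$ with the coordinate embedding $\C{r}{A \otimes B} \hookrightarrow \C{s}{A \otimes B}$ that appends $s - r$ zero copies in each partition summand (which commutes with $\IP$) yields the required map into $\C{s}{\TS{W}_0}$, still defined at $\B{x_0}$.

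For the inductive step I would assume a rational map $H_a \colon \FF^n \dashrightarrow \C{s}{\TS{W}_a}$ defined at $\B{x_0}$ with $(\D^a f)|_\V{V} = \IP \circ H_a|_\V{V}$, and reapply Theorem~\ref{theo:induction-step} to $\D^a f$. Two small observations make this legitimate. First, $f$ is a polynomial matrix, hence a polynomial map, so each iterated total derivative $\D^a f$ is again a polynomial map (its entries are polynomials, of degree lowered by $a$), which is exactly the type of input Theorem~\ref{theo:induction-step} requires. Second, $\TS{W}_a$ is literally a tensor space $\bigotimes_{i=1}^{a+2} V_i$ with $V_1 = A$, $V_2 = B$, and $V_i \cong \FF^n$ for $3 \le i \le a+2$, so the hypotheses of Theorem~\ref{theo:induction-step} hold with this tensor space, the polynomial map $\D^a f$, the rank parameter $s$, and the decomposition $H_a$.

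Applying Theorem~\ref{theo:induction-step} then produces a rational map $H_{a+1} \colon \FF^n \dashrightarrow \C{s'}{\TS{W}_a \otimes \FF^n}$ defined at $\B{x_0}$ with $(\D(\D^a f))|_\V{V} = \IP \circ H_{a+1}|_\V{V}$ and $s' = \max\{s, \codim\V{V}\}$. The one point that needs care is that the rank parameter does not grow: since $s = \max\{r, \codim\V{V}\}$ is a fixed point of $t \mapsto \max\{t, \codim\V{V}\}$, we get $s' = \max\{r, \codim\V{V}\} = s$. Because $\D(\D^a f) = \D^{a+1} f$ and $\TS{W}_a \otimes \FF^n = \TS{W}_{a+1}$, the map $H_{a+1}$ is precisely the data required for the case $a+1$, which closes the induction.

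Since the whole argument is just a repeated invocation of Theorem~\ref{theo:induction-step}, there is no genuine obstacle beyond bookkeeping; the only two things to keep straight are (i) that $\D^a f$ stays a polynomial---not merely rational---map at every stage, so it remains admissible input to Theorem~\ref{theo:induction-step}, and (ii) the idempotence of $t \mapsto \max\{t, \codim\V{V}\}$, which is exactly what lets a single $s = \max\{r, \codim\V{V}\}$ serve uniformly for all $a$ instead of a bound that degrades as $a$ increases.
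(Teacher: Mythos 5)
Your proposal is correct and follows essentially the same route as the paper: induction on $a$, re-invoking Theorem~\ref{theo:induction-step} with $f' = \D^a f$ (which stays a polynomial map) and using that $\max\{s,\codim\V{V}\} = s$ so the parameter does not grow. Your explicit zero-padding of $\C{r}{A\otimes B}$ into $\C{s}{A\otimes B}$ in the base case is a minor bookkeeping point the paper leaves implicit, but otherwise the arguments coincide.
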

\begin{proof}
	Let $H \colon \FF^n \dashrightarrow \C{r}{A \otimes B}$ be a rational map
	defined at $\B{x_0}$ such that $M|_{\V{V}} = \IP\circ H|_{\V{V}}$. 
	We proceed by induction on $a \ge 0$. 
	The induction base $a=0$ clearly holds 
	with $H'=H$, 
	since $(\D^0 M)|_{\V{V}} = M|_{\V{V}} = \IP\circ H'|_{\V{V}}$,
	hence we move to the induction step.
	By the induction hypothesis, there exists a rational map $H' \colon \FF^n \dashrightarrow \C{s}{A \otimes B \otimes (\FF^n)^{\otimes a}}$
	defined at $\B{x_0}$,
	with $s = \max\{r,\,\codim \V{V}\}$, 
	such that $(\D^a M)|_{\V{V}}  = \IP\circ H'|_{\V{V}}$.
	Apply Theorem~\ref{theo:induction-step} with the polynomial map 
	$$f = \D^a M \colon \FF^n \to A \otimes B \otimes (\FF^n)^{\otimes a} =: \TS{V}.$$
	As $H' \colon \FF^n \dashrightarrow \C{s}{\TS{V}}$ satisfies 
	$f|_{\V{V}} = (\D^a M)|_{\V{V}} = \IP\circ H'|_{\V{V}}$,
	it follows from this application that there exists a rational map 
	$H'' \colon \FF^n \dashrightarrow \C{s'}{\TS{V}\otimes\FF^n}$
	defined at $\B{x_0}$ such that
	$$(\D^{a+1} M)|_{\V{V}} 
	= (\D f)|_{\V{V}} = \IP\circ H''|_{\V{V}}$$
	with $s'=\max\{s,\codim\V{V}\}=s$. 
	This completes the induction step and the proof.
\end{proof}


\begin{remark}\label{remark:boring}
	It is instructive to consider the special case of Theorem~\ref{theo:induction-step} where $f \colon \FF^n \to \TS{V}$ vanishes on all of the variety $\V{V} \sub \overline{\FF}^n$. 
	For example, this happens for $f=\D\sl{T}\colon \FF^{n} \to \FF^m \otimes \FF^n$, where $\sl{T} \colon \prod_{i=1}^{k-1} \FF^{n_i}\to\FF^{m}$ ($n:=\sum_{i=1}^{k-1 }n_i$) is a $k$-tensor slicing,
	and $\V{V} = \{(\B{0},\B{0},\B{x}_3,\ldots,\B{x}_{k-1}) \mid \B{x}_i \in \overline{\FF}^{n_i}\} \sub \overline{\FF}^n$;
	indeed, this is because each entry of $f$ is a $(k-2)$-linear form on $\prod_{i \neq j} \FF^{n_i}$ for some $1 \le j \le k-1$, which necessarily vanishes on $\V{V}$.
	As we will see next, in the special case where $\V{V}$ is such that the polynomial $d$-tensor $f$ satisfies $f|_{\V{V}}=0$,
	Theorem~\ref{theo:induction-step}
	gives a partition rank decomposition of the $(d+1)$-tensor $\D f$ on an open subset of $\V{V}$ with at most $\codim\V{V}$ summands, and in fact, a flattening rank decomposition. 
	(The flattening rank of a tensor in $\bigotimes_{i=1}^k V_i$ is its matrix rank when viewed as a matrix in $(\bigotimes_{i\neq j} V_i) \otimes V_j$ for some $j$.)
	
	
	Note that $f|_{\V{V}}=0$ implies a trivial rational decomposition for $f$ on $\V{V}$, given by
	$f|_{\V{V}} = \IP \circ H|_{\V{V}}$ where $H \colon \FF^n \to \C{0}{\TS{V}}$ is simply $H=0$.
	Nevertheless, the proof of Theorem~\ref{theo:induction-step} guarantees a (generally non-trivial) rational decomposition for $\D f$ on $\V{V}$ of the form $(\D f)|_{\V{V}} = \IP \circ H'|_{\V{V}}$ with $H' \colon \FF^n \to \C{\codim\V{V}}{\TS{V} \otimes \FF^n}$.
	To see this, first note that since $H=0$, 
	the derivative of $f$ along any tangent vector to $\V{V}$ vanishes (see~(\ref{eq:D-tangent})).
	The proof constructs a rational map $Q \colon \FF^n \dashrightarrow \FF^n \otimes \FF^n$ such that 
	$Q(\B{x})$ is a projection matrix onto a complementary subspace of $\T_{\B{x}}\V{V}$ for every $\B{x} \in \V{V} \cap \domain(Q)$ (in particular, $\B{x}$ is non-singular on $\V{V}$),
	and thus obtains the decomposition (see~(\ref{eq:D-decomposition}))
	$$\D f(\B{x})
	= \sum_{i=1}^{\codim\V{V}} (\D f(\B{x}))Q(\B{x})_i \otimes Q^\tp(\B{x})_i.$$
	Since $(\D f(\B{x}))Q(\B{x})_i \in \TS{V}$ is a $d$-tensor and $Q^\tp(\B{x})_i \in \FF^n$ is a $1$-tensor,
	this is a flattening rank decomposition of the $(d+1)$-tensor $\D f(\B{x}) \in \TS{V} \otimes \FF^n$ for each $\B{x}$ in the open subset $\V{V} \cap \domain(Q)$.
	Let us finally note that iterating Theorem~\ref{theo:induction-step}, as in Corollary~\ref{coro:induction-step}, only gives a flattening rank decomposition for $(\D^{i+1} f)|_\B{V}$ as long as $(\D^{i} f)|_\B{V} = 0$; once $(\D^{i} f)|_\B{V} \neq 0$, the main term in the decomposition of $(\D^{i+1} f)|_\B{V}$ obtained by the proof may be nonzero, yielding in general a partition rank decomposition.
\end{remark}

\begin{proof}[Proof of Theorem~\ref{theo:PR-bound}]
	Let $T \in \bigotimes_{i \in [k]} V_i$ be a $k$-tensor over a perfect field $\FF$.
	Identify $V_1\times\cdots\times V_{k-1}$ with $\FF^{n}$, and $V_k$ with $\FF^m$.
	Let $\sl{T} \colon \FF^n \to \FF^{m}$ be the slicing of $T$, 
	and consider the polynomial matrix $\D \sl{T} \colon \FF^n \to \FF^m \otimes \FF^n$.
	We first prove that $\V{X} \sub \ker\sl{T}$ implies that for every $\B{x} \in \V{X}$,
	\begin{equation}\label{eq:T-vs-D}
	\rank(\D \sl{T}(\B{x})) \le \codim \V{X}.
	\end{equation}
	Write $\sl{T}=(\sl{T}_1,\ldots,\sl{T}_m)$, so that the $(k-1)$-linear forms $\sl{T}_1,\ldots,\sl{T}_m$ cut out $\ker \sl{T}$.
	This means that $\{\sl{T}_1,\ldots,\sl{T}_m\} \sub \I(\ker \sl{T}) \sub \I(\V{X})$. 
	We claim that $\T_\B{x}(\V{X}) \sub \ker(\D \sl{T}(\B{x}))$;
	indeed, for every $\B{x} \in \V{X}$, if $\B{v} \in \T_\B{x}(\V{X})$ then $(\D g(\B{x}))\B{v}=0$ for every $g \in \I(\V{X})$, and in particular for every $\sl{T}_i$, so 
	$$(\D\sl{T}(\B{x}))\B{v} = (\D \sl{T}_1(\B{x}),\ldots,\D \sl{T}_m(\B{x}))\B{v}
	= \B{0},$$
	meaning $\B{v} \in \ker(\D\sl{T}(\B{x}))$.
	We obtain~(\ref{eq:T-vs-D}) since
	$$
	\rank(\D \sl{T}(\B{x})) = n-\dim\ker(\D \sl{T}(\B{x})) \le n-\dim\T_\B{x}(\V{X}) \le \codim \V{X}
	$$
	where the last inequality applies Fact~\ref{fact:dim-tangent} using the irreducibility of $\V{X}$. 
	
	Put $M=\D \sl{T} \colon \FF^n \to \FF^m \otimes \FF^n$,
	and $r=\rank M(\B{x_0})$ where $\B{x_0}$ is an $\FF$-rational point on $\V{X}$ that is neither singular nor $M$-singular, as guaranteed by the statement.
 	The latter means that $r=\max_{\B{x} \in \V{X}} \rank M(\B{x})$; or, put differently, $M(\V{X}) \sub \V{M}_r$ where $\V{M}_r \sub \overline{\FF}^m \otimes \overline{\FF}^n$ is the set of matrices of rank at most $r$.
	Apply Lemma~\ref{lemma:rank-factorization} with $m$, $n$, and $r$, and use appropriate row and column permutations,
	to obtain a rational map $\R{F} \colon \FF^{m \times n} \dashrightarrow \FF^{m \times r} \times \FF^{r \times n}$ 
	that is defined at $M(\B{x_0})$ (using $\rank M(\B{x_0})=r$), such that for any matrix $A \in \V{M}_r \cap \domain(\R{F})$ we have that
	$\R{F}(A)=(\R{F}_1(A),\R{F}_2(A))$ is a rank factorization of $A$.
	Let $F \colon \FF^{n} \dashrightarrow \FF^{m \times r} \times \FF^{r \times n}$ 
	be the rational map given by the composition $F = \R{F} \circ M$.
	Note that $\domain(F)=\domain(\R{F})$.
	Thus, $F$ is defined at $\B{x_0}$, and for any $\B{x} \in M^{-1}(\V{M}_r) \cap \domain(F)$ we have that $F(\B{x})=(F_1(\B{x}),F_2(\B{x}))$ satisfy 
	$$M(\B{x}) = F_1(\B{x})F_2(\B{x}) 
	= \sum_{i=1}^r F_1(\B{x})_i \otimes F_2^\tp(\B{x})_i$$
	where $A_i$ denotes column $i$ of a matrix $A$.
	Let 
	$H(\B{x}) := (F_1(\B{x})_i,\, F_2^\tp(\B{x})_i)_{i \in [r]}$.
	Observe that for every $\B{x} \in \FF^n$ we have that $H(\B{x}) \in (\FF^m \times \FF^n)^r = \C{r}{\FF^m \otimes \FF^n}$.
	Note that $\domain(H)=\domain(F)$. 
	Thus, $H \colon \FF^n \dashrightarrow \C{r}{\FF^m \otimes \FF^n}$ is a rational map defined at $\B{x_0}$ such that 
	for every $\B{x} \in M^{-1}(\V{M}_r) \cap \domain(H)$ we have
	$M(\B{x}) = \IP(H(\B{x}))$.
	Since $\V{X} \sub M^{-1}(\V{M}_r)$, 
	this means that
	$$M|_\V{X} = \IP \circ H|_\V{X} .$$
	
	Now, apply Corollary~\ref{coro:induction-step} with the irreducible variety $\V{X} \sub \overline{\FF}^n$, the non-singular point $\B{x_0} \in \V{X}$, the polynomial matrix $M\colon \FF^n \to \FF^m \otimes \FF^n$, the rational map $H \colon \FF^n \dashrightarrow \C{r}{\FF^m \otimes \FF^n}$, and $a=k-2$,
	to obtain a rational map $H' \colon \FF^n \dashrightarrow \C{s}{\FF^m \otimes (\FF^n)^{\otimes k-1}}$ that is defined at $\B{x_0}$ such that 
	$\D^{k-2} M|_{\V{X}} = \IP\circ H'|_{\V{X}}$
	for $s = \max\{r,\,\codim \V{X}\} = \codim \V{X}$
	by~(\ref{eq:T-vs-D}).
	Evaluating at $\B{x_0} \in \V{X}$, we obtain 
	$$\D^{k-1} \sl{T}(\B{x_0}) = \D^{k-2} M(\B{x_0}) = \IP(H'(\B{x_0})) .$$
	Since $\B{x_0}$ is $\FF$-rational, this implies that 
	$\D^{k-1} \sl{T}(\B{x_0}) \in \IP(\C{\codim\V{X}}{\FF^m \otimes (\FF^n)^{\otimes k-1}})$.
	Therefore,
	$$\PR(T) \le \PR(\D^{k-1} \sl{T}(\B{x_0})) \le (2^{k-1}-1)\codim\V{X},$$
	where the first inequality follows from Proposition~\ref{claim:PR-restriction}
	and the second inequality from Fact~\ref{fact:PR-image}.
	This completes the proof.
\end{proof}

\subsection*{Technical comparison with Schmidt's proof.\,} 
Here we expand on several key differences between Schmidt's proof~\cite{Schmidt84,Schmidt85}, which gives a version of Corollary~\ref{theo:main2}, and ours.
Schmidt constructs certain functions\footnote{These are referred to as global sections in~\cite{KazhdanPo21}.} which enable expressing $T$ using  bilinear forms (Section~22 in~\cite{Schmidt85}),  
and uses their derivatives---as well as additional linear forms---to obtain 
a decomposition for $T$
in a recursive manner.
In contrast, we take derivatives of $T$ itself, which are then restricted to an open subset of a subvariety of $\ker\sl{T}$. 
The core analysis in Schmidt's proof (Sections~22 and~23 in~\cite{Schmidt85}) significantly differs from ours, and in particular leads to an asymptotically worse multiplicative constant ($\PR(T) \le C_k\GR(T)$ with $C_k$ roughly $k!$, as opposed to $2^{k-1}-1$ in our Corollary~\ref{theo:main2}).
Additionally, unlike in~\cite{Schmidt85},
the point $\B{x_0}$ at which we take derivatives (recall Theorem~\ref{theo:induction-step}) is required to
be not just non-singular but also $\D\sl{T}$-non-singular.
Furthermore, we obtain $\B{x_0}$ using an iterative process that is more subtle, and requires more machinery; this includes defining a variant of the notion of degree of a variety that is suitable for our needs (see Definition~\ref{def:deg-growth} below).
We control all irreducible components throughout the process, and in particular retain all the rational points. 
Somewhat counter-intuitively, $\B{x_0}$ is ``generic'' only on a subvariety of $\ker\sl{T}$ which might be considerably smaller, 
in which case we do not expect to glean much information on $T$ from its derivatives at $\B{x_0}$.
In fact, this subvariety could end up being the zero subvariety $\{\B{0}\}$, and so $\B{x_0}=\B{0}$, in which case the repeated derivatives of the multilinear $T$ at $\B{x_0}$ reveal no information on $T$ at all! (Recall Remark~\ref{remark:boring}.) However, since the subvariety is constructed in such a way that it contains all the $\FF$-rational points of $\ker\sl{T}$, in this extreme case we also have that $\AR(T)$ is maximal, and so there is nothing to prove.

\section{Proof of the Main Theorem}\label{sec:final}
In this section we show that any variety has all its $\FF$-rational points contained in a subvariety $\V{Z}$ of controlled complexity and, moreover, at least one of those $\FF$-rational points---if any exist---satisfies a certain strong notion of non-singularity with respect to a top-dimensional irreducible component of $\V{Z}$ defined over $\FF$. 
We then show that for the $\ker\sl{T}$ variety, a subvariety as above yields a lower bound on $\AR(T)$ which matches the upper bound in Theorem~\ref{theo:PR-bound}, thus completing the proof of Theorem~\ref{theo:main}.


\subsection{Auxiliary results}



For a variety $\V{V} \sub \overline{\FF_q}^n$, where $\FF_q$ is the finite field of size $q$, 
we denote the image of the Frobenius automorphism on $\V{V}$ by 
$\Frob(\V{V}) = \{ (x_1^q,\ldots,x_n^q) \mid (x_1,\ldots,x_n) \in \V{V}\}$.
The Frobenius automorphism characterizes definability over $\FF_q$, as follows (see Tao's Corollary 4 in~\cite{Tao12}; we include a proof for completeness). 
\begin{fact}\label{lemma:Frob}
	Let $\FF$ be a finite field, and let $\V{V}$ be 
	a variety
	over $\overline{\FF}$.
	Then $\V{V}$ is defined over $\FF$ if and only if $\Frob(\V{V})=\V{V}$.
%
\end{fact}
\begin{proof}[Proof]
	Put $\FF=\FF_q$, and write $\B{x}^q$ for $(x_1^q,\ldots,x_n^q)$.
	For an $\overline{\FF}$-polynomial $f$, let $\Frob(f)$ be obtained from $f$ by replacing each coefficient $c$ with $c^q$. 
	Observe
	that $\Frob(f)(\B{x}^q) = f(\B{x})^q$,
	and that $\Frob(f)=f$ if and only if $f$ is an $\FF$-polynomial (in which case $f(\B{x}^q) = f(\B{x})^q$).
	
	Suppose $\V{V}$ is defined over $\FF$.
	Then $\B{x}^q \in \V{V}$ iff $\B{x} \in \V{V}$ iff $\B{x}^q \in \Frob(\V{V})$.
	Since every $\B{y} \in \overline{\FF}^n$ is of the form $\B{x}^q$ for some $\B{x} \in \overline{\FF}^n$, we get $\Frob(\V{V})=\V{V}$.

	Suppose $\Frob(\V{V})=\V{V}$.
	For any $f \in \I(\V{V}):=I$ we have $\Frob(f) \in I$; 
	indeed, $\Frob(f)(\B{x}^q) = f(\B{x})^q = 0$ for $\B{x} \in \V{V}$, so $\Frob(f) \in \I(\Frob(\V{V})) = I$.
	Let $G \sub I$ be the reduced Gr\"{o}bner basis of $I$, under any arbitrary monomial ordering
	(see, e.g.,~\cite{CoxLiOS}, Chapter~2.7, Theorem~5).
	Replacing any $f \in G$ with $\Frob(f) \in I$ again yields a reduced Gr\"{o}bner basis of $I$, 
	since $\Frob(f)$ has the same support as $f$ and its leading coefficient is $1$ as well. The uniqueness of $G$ 
	implies $f=\Frob(f)$, so $f$ is an $\FF$-polynomial. Thus, $\I(\V{V}) = \langle G \rangle$ is generated by $\FF$-polynomials, completing the proof.
\end{proof}






The \emph{degree} $\deg\V{V}$ of an irreducible variety $\V{V} \sub \overline{\FF}^n$ is the cardinality of the intersection of $\V{V}$ with a generic linear subspace in $\overline{\FF}^n$ of dimension $\codim\V{V}$ (a well-defined number).
The \emph{(total) degree} of an arbitrary variety is the sum of the degrees of its irreducible components.
%
The following is a version of the Schwartz-Zippel lemma for varieties (see, e.g.,~\cite{BukhTs12,DvirKoLo14}).

\begin{fact}[Generalized Schwartz-Zippel lemma]\label{fact:SZ}
	Let $\FF$ be a finite field.
	For any variety $\V{V}$ over $\overline{\FF}$,
	$$|\V{V}(\FF)| \le \deg\V{V}\cdot|\FF|^{\dim\V{V}}.$$
\end{fact}


The following is a version of B\'{e}zout’s theorem (see~\cite{Fulton} p.~223, Example~12.3.1, or~\cite{BukhTs12}).

\begin{fact}[Generalized B\'{e}zout’s theorem]\label{fact:Bezout}
	For any two varieties $\V{U},\V{V} \sub \overline{\FF}^n$,
	$$\deg(\V{U} \cap \V{V}) \le \deg\V{U}\cdot\deg\V{V}.$$
\end{fact}

Let $\V{V}_0$ denote the union of the $0$-dimensional irreducible components of a variety $\V{V}$. 
We will also use the following overdetermined case of B\'{e}zout's theorem (see~\cite{Tao11}, Theorem 5).
\begin{fact}[B\'{e}zout's theorem, overdetermined case]\label{fact:bezout-over}
	Let $\V{V} \sub \overline{\FF}^n$ be a variety
	cut out by $m \ge n$ polynomials $f_1,\ldots,f_m$. 
	Write $\deg f_1 \ge \cdots \ge \deg f_m \ge 1$. 
	Then 
	$$|\V{V}_0| \le \prod_{i=1}^n \deg f_i.$$
\end{fact}


%

We henceforth denote by $\Z(f)$ the variety cut out by a (single) polynomial $f$.
The following lemma originates with Mumford's paper~\cite{Mumford70}, and can be deduced from an argument in Catanese's paper~\cite{Catanese92} (see also~\cite{BlancoJeSo04} and the references within). For completeness, we sketch Catanese's proof below.
\begin{fact}[\cite{Mumford70},\cite{Catanese92}]\label{fact:Mumford}
	Let $\V{X} \sub \overline{\FF}^n$ be an irreducible variety, and let $\B{x} \in \V{X}$ be a non-singular point. 
	Then there are $r=\codim\V{X}$ polynomials $f_1,\ldots,f_r \in \I(\V{X})$ of degrees at most $\deg\V{X}$ such that $\T_{\B{x}} \V{X} = \T_{\B{x}} \Z(f_1) \cap \dots \cap \T_{\B{x}} \Z(f_r)$. 
\end{fact}

\begin{proof}[Proof sketch]
	Put $k:=\dim \V{X}+1$ and $\KK:=\overline{\FF}$.
	Call a linear map $p: \KK^n \to \KK^k$ \emph{nice} if 
	$p|_\V{X} \colon \V{X} \to \overline{p(\V{X})}$ is birational, and locally an isomorphism at $\B{x}$, in the sense that there is an open neighborhood $U \sub \V{X}$ of $\B{x}$ such that $p|_U \colon U \to p(U)$ is an isomorphism. 
	The proof follows as in~\cite{Catanese92} (paragraph preceding Remark~(1.17)):
	\begin{enumerate}
		\item The nice maps form a nonempty open set in the space of all linear maps from $\KK^n \to \KK^k$ (the proof uses Chow forms).
		\item\label{item:proj-tang} For every nice map $p$ there is a polynomial $f$ with $\deg f \le \deg \V{X}$ satisfying $\T_\B{x} \V{X} = \T_\B{x} \Z(f) + \ker p$.
		\item Hence, for a generic choice of $r=\codim\V{X} = \codim\T_\B{x} \V{X}$ nice maps $p_1, \ldots, p_r$, the resulting polynomials $f_1, \ldots, f_r$ satisfy $\T_{\B{x}} \V{X} = \T_{\B{x}} \Z(f_1) \cap \dots \cap \T_{\B{x}} \Z(f_r)$, as needed.
	\end{enumerate}
	
	
	For completeness, we include a proof of~(\ref{item:proj-tang}).
	Let $p$ be nice. Then the variety $\V{X}^*:=\overline{p(\V{X})}$ is irreducible, $\dim \V{X}^* = \dim \V{X}$, 
	and $p$ restricts to an isomorphism between the linear subspaces $\T_\B{x} \V{X}$ and $\T_{p(\B{x})} \V{X}^*$.
	We deduce that $\V{X}^* \sub \KK^k$ is a hypersurface ($\codim_{\KK^k}\V{X}^*=1$), so $\V{X}^*=\Z(g)$ for some irreducible polynomial $g \colon 
	\KK^k \to \KK$ with $\deg g = \deg\V{X}^*$.
	We have
	$p(\T_\B{x} \V{X}) = \T_{p(\B{x})} \Z(g) = p(\T_\B{x} \Z(g \circ p))$, 
	where the last equality uses the definition of a tangent space of hypersurfaces:
	\begin{align*}
		\T_{p(\B{x})} \Z(g) &= \{ \B{u} \in \KK^k \mid (\D g(p(\B{x})))\B{u} = 0\}\\
		&= \{ p(\B{v}) \in \KK^k \mid (\D g(p(\B{x})))p(\B{v}) = 0\}\\
		&= p\big(\{ \B{v} \in \KK^n \mid (\D g\circ p (\B{x}))\B{v} = 0\}\big)
		= p(\T_\B{x} \Z(g \circ p)).
	\end{align*}
	Put $f:=g \circ p \colon \KK^n \to \KK$, and note that $\deg f \le \deg g = \deg \V{X}^* \le \deg \V{X}$ (for the last inequality see, e.g., Lemma~2 in~\cite{Heintz83}, or the proof of Lemma 3.4 in~\cite{DvirKoLo14}).
	Thus, $p(\T_\B{x} \V{X}) = p(\T_\B{x} \Z(f))$, which proves~(\ref{item:proj-tang}).
\end{proof}

For an irreducible variety $\V{X} \sub \overline{\FF}^n$ and a polynomial matrix $M \colon \overline{\FF}^n \to \overline{\FF}^{m \times n}$, 
the $M$-singular locus of $\V{X}$ is the subvariety of $M$-singular points, 
$$\Sing_M\V{X} = \Big\{ \B{x'} \in \V{X} \,\Big\vert\, \rank M(\B{x'}) < \max_{\B{x} \in \V{X}} \rank M(\B{x}) \Big\}.$$
If $M$ is the Jacobian matrix of a generating set of $\I(\V{X})$ then $\Sing_M$ is the singular locus of $\V{X}$, in which case we omit the subscript $M$.
Note that $\Sing_M\V{X}$ is a strict subvariety of $\V{X}$.
%
We next show that the $M$-singular locus is contained in a strict subvariety of controlled degree.

\begin{lemma}\label{lemma:Sing-deg-bd}
	Let $\V{X} \sub \overline{\FF}^n$ be an irreducible variety, 
	let $M \colon \overline{\FF}^n \to \overline{\FF}^{m \times n}$ be a polynomial matrix,
	and put $r=\max_{\B{x} \in \V{X}} \rank M(\B{x})$.
	Suppose there is an $M$-non-singular point $\B{x} \in \V{X}$
	witnessed by an $r \times r$ submatrix of $M$ (i.e., invertible at $\B{x}$) whose every polynomial is of degree at most $d$.
	Then there is a strict subvariety $\Sing_M\V{X} \sub \V{Y} \subsetneq \V{X}$
	with $\deg\V{Y} \le rd\deg\V{X}$.
\end{lemma}
\begin{proof}
	Let $D \colon \overline{\FF}^n \to \overline{\FF}$ be the determinant of the $r \times r$ submatrix of $M$ in the statement.
	Note that $D$ is a polynomial of degree at most $rd$ satisfying $D(\B{x}) \neq 0$. 
	Let $\V{Y} = \Z(D) \cap \V{X}$.
	We have $\Sing_M\V{X} \sub \V{Y} \subsetneq \V{X}$ since 
	every $M$-singular point $\B{x'}$ on $\V{X}$ satisfies $D(\B{x'})=0$ as $\rank M(\B{x'}) < r$, 
	and since $\B{x} \notin \V{Y}$. Moreover, by the B\'{e}zout's bound in Fact~\ref{fact:Bezout}, $\deg\V{Y} \le \deg\Z(D) \cdot \deg\V{X} \le rd\deg\V{X}$. This completes the proof.
\end{proof}

Fact~\ref{fact:Mumford} and Lemma~\ref{lemma:Sing-deg-bd} together yield a degree bound for the (standard) singular locus.\footnote{This also likely follows from a covering argument of Helfgott in~\cite{Helfgott18}.}
\begin{corollary}\label{coro:Sing-deg-bd}
	Every irreducible variety $\V{X} \sub \overline{\FF}^n$ has a strict subvariety $\Sing\V{X} \sub \V{Y} \subsetneq \V{X}$
	with 
	$\deg\V{Y} \le \codim\V{X}(\deg\V{X}-1)\deg\V{X}$.
\end{corollary}
\begin{proof}
	Let $\B{x}$ be any non-singular point on $\V{X}$. 
	Consider the $r:=\codim\V{X}$ polynomials of degree at most $\deg\V{X}$ guaranteed by Fact~\ref{fact:Mumford}, and let $J \colon \overline{\FF}^n \to \overline{\FF}^{r \times n}$ be their Jacobian matrix, so that $\ker J(\B{x}) = \T_{\B{x}} \V{X}$.
	Then $\rank J(\B{x}) = \codim\T_{\B{x}} \V{X} = \codim\V{X} = r$ 
	by our choice of $\B{x}$.
	Apply Lemma~\ref{lemma:Sing-deg-bd} on $\V{X}$, $J$, 
	and any invertible $r \times r$ submatrix of $J(\B{x})$
	(note $\max_{\B{x'} \in \V{X}} \rank J(\B{x'}) = \codim\V{X}=r$ by Fact~\ref{fact:dim-tangent}).
	Since every polynomial in $J$ is of degree at most $\deg\V{X}-1$, we thus obtain a strict subvariety $\Sing\V{X} \sub \V{Y} \subsetneq \V{X}$
	with $\deg\V{Y} \le r(\deg\V{X}-1)\deg\V{X}$, completing the proof.
	%
%
\end{proof}



\subsection{Main proof}

For a variety $\V{V}$, let $\V{V}^\top$ denote the union of the irreducible components of $\V{V}$ of dimension $\dim\V{V}$, and let $\V{V}_\bot$ denote the union of the remaining, lower-dimensional irreducible components.

We define a variant of the notion of the degree of a variety
which would allow us to iterate complexity bounds.
For a variety $\V{V} \sub \overline{\FF}^n$, let $\V{V}_m$ denote its codimension-$m$ part, meaning the union of all irreducible components of $\V{V}$ of codimension $m$.

\begin{definition}[Degree-growth]\label{def:deg-growth}
	For a variety $\V{V} \sub \overline{\FF}^n$, 
	$$\d(\V{V}) = \max_{m} \deg (\V{V}_m)^{1/m}. $$
\end{definition}
By convention we set $\d\big(\overline{\FF}^n\big)=0$.

\begin{claim}\label{claim:props}
We have the following properties for arbitrary varieties $\V{U}$, $\V{V}$:
\renewcommand{\theenumi}{\roman{enumi}}
\begin{enumerate}
	\item\label{item:prop-d-bounds} $\deg(\V{V}^\top) \le \d(\V{V})^{\codim\V{V}} \le \deg(\V{V})$.\\
	In particular, if $\V{V}$ is equidimensional\footnote{That is, all irreducible components are of the same dimension.} (or irreducible) then $\deg(\V{V})=\d(\V{V})^{\codim\V{V}}$.
	\item\label{item:prop-d-poly} $\d(\V{V}) \le d$ if $\V{V}$ is cut out by polynomials of degrees at most $d$.
	\item\label{item:prop-d-union} $\d(\V{U} \cup \V{V}) \le \d(\V{U})+\d(\V{V})$.
\end{enumerate}
\end{claim}
We note the Property~(\ref{item:prop-d-poly}) is not satisfied by either $\deg(\V{V})$ or $\deg(\V{V}^\top)$, which are the notions of complexity of a variety frequently used in the literature, since neither the irreducible components nor equidimensional components of $\V{V}$ are generally cut out by polynomials of degree also at most $d$.
In the proof of Theorem~\ref{theo:main} we will use Property~(\ref{item:prop-d-poly}) to bound $\d(\ker\sl{T})$, and use Property~(\ref{item:prop-d-bounds}) to iterate that bound for the subvarieties we construct. 

\begin{proof}[Proof of Claim~\ref{claim:props}]
Property~(\ref{item:prop-d-bounds}) follows from the inequalities
$$\deg(\V{V}_{\codim\V{V}})^{1/\codim\V{V}} \le \d(\V{V}) \le \max_{m \ge \codim\V{V}} \deg(\V{V})^{1/m} ,$$
where the last inequality uses $\deg(\V{V}_m) \le \deg(\V{V})$.
Property~(\ref{item:prop-d-poly}) follows by noting that for each $m$, intersecting $\V{V}_m$ with a generic linear subspace in $\overline{\FF}^n$ of dimension $m$ gives a $0$-dimensional variety---naturally embedded in $\overline{\FF}^m$ and cut out by polynomials of degrees at most $d$---whose cardinality is $\deg\V{V}_m \le d^m$ by Fact~\ref{fact:bezout-over}.
(Property~(\ref{item:prop-d-poly}) can also be proved along the lines of Proposition~2.3 in~\cite{Faltings91}.)
Property~(\ref{item:prop-d-union}) follows from, for some $m$:
\begin{align*}
	\d(\V{U} \cup \V{V}) 
	\le (\deg \V{U}_m + \deg \V{V}_m)^{1/m} 
	\le (\deg \V{U}_m)^{1/m} + (\deg \V{V}_m)^{1/m} 
	\le \d(\V{U}) + \d(\V{V}) .
\end{align*}
\end{proof}
%
%
%
%
We further have the following version of the Schwartz-Zippel lemma for 
degree-growth. 

\begin{lemma}\label{lemma:SZ-d}
	Let $\FF$ be a finite field.
	For every variety $\V{V} \sub \overline{\FF}^n$ with $\d(\V{V})<|\FF|$,
	$$|\V{V}(\FF)| \le \frac{d(\V{V})^{\codim\V{V}}|\FF|^{\dim\V{V}}}{1-\d(\V{V})/|\FF|}.$$
	%
	%
\end{lemma}
\begin{proof}
	Put $\delta=\d(\V{V})/|\FF|$. 
	Using Fact~\ref{fact:SZ},
	\begin{align*}
		\frac{|\V{V}(\FF)|}{|\FF|^n}
		&\le \sum_{m \ge \codim\V{V}} \deg\V{V}_m|\FF|^{-m}  
		\le \sum_{m \ge\codim\V{V}} \delta^m
		\le \delta^{\codim\V{V}}/(1-\delta),
	\end{align*}
	where the second inequality bounds $\deg(\V{V}_m) \le \d(\V{V})^m$.
	This completes the proof.
\end{proof}

We are now ready to prove the main theorem.
Our proof produces a nested sequence of subvarieties, obtained by applying---separately on each top-dimensional irreducible component---either one of two operators: one intersects the component with its image under the Frobenius automorphism, and the other takes a low-degree strict subvariety that contains a generalized singular locus.
We show that if any irreducible component is fixed by both operators then it must satisfy the properties needed for Theorem~\ref{theo:PR-bound}, thus bounding the partition rank from above. Moreover, we show that each constructed subvariety retains all the $\FF$-rational points and is of controlled complexity, which enables bounding the analytic rank from below and completing the proof of Theorem~\ref{theo:main} (note that we apply Theorem~\ref{theo:PR-bound} on an irreducible component, which need not retain all $\FF$-rational points!).

\begin{proof}[Proof of Theorem~\ref{theo:main}]
Let $T$ be a $k$-tensor over a finite field $\FF$,
let $\sl{T} \colon \FF^n \to \FF^m$ be the slicing of $T$,
and put $M = \D\sl{T} \colon \FF^n \to \FF^m \otimes \FF^n$.
For an irreducible variety $\V{X}$ over $\overline{\FF}$, 
let $S_M(\V{X})$ be the strict subvariety from Lemma~\ref{lemma:Sing-deg-bd}, 
let $S(\V{X})$ be the strict subvariety from Corollary~\ref{coro:Sing-deg-bd}, 
and let $S^+(\V{X}) = S(\V{X}) \cup S_M(\V{X})$.
Note that $S^+(\V{X})$ is thus also a strict subvariety of $\V{X}$.
We define the following operators mapping any irreducible variety $\V{X}$ over $\overline{\FF}$ to a subvariety thereof:
\begin{itemize}
	\item $\F(\V{X}) = \V{X} \cap \Frob\V{X}$.
	\item $\Si(\V{X}) = S^+(\V{X})$ if $\V{X}(\FF) \sub S^+(\V{X})$, and otherwise $\Si(\V{X})=\V{X}$.
	\item $\Op(\V{X})=\F(\V{X})$ if $\F(\V{X})\neq\V{X}$, otherwise $\Op(\V{X})=\Si(\V{X})$.
\end{itemize}
For an arbitrary variety $\V{V}$, we define $\Op(\V{V}) = \bigcup_{\V{X}} \Op(\V{X}) \cup \V{V}_\bot$ where the union is over all irreducible components $\V{X}$ of $\V{V}^\top$.
We will show that $\Op$ satisfies the following properties 
for any variety $\V{V} \sub \ker\sl{T}$ (this assumption on $\V{V}$ enables finding a $\D\sl{T}$-non-singular point):
\renewcommand{\theenumi}{\arabic{enumi}}
\begin{enumerate}
	\item\label{item:prop-preserve} $\V{V}(\FF) \sub \Op(\V{V})$.
	\item\label{item:prop-deg} 
	$\d(\Op(\V{V})) \le 2\f_{\V{V}}(\d(\V{V}))$ where
	$\f_{\V{V}}(x) = 2k\codim\V{V} \cdot x^2$.
	%
	%
	\item\label{item:prop-good} If $\dim\Op(\V{V})=\dim\V{V}$ and $\V{V}\neq\emptyset$ then $\V{V}^\top$ has an irreducible component defined over $\FF$ that has an $\FF$-rational point that is neither singular nor $M$-singular.
\end{enumerate}
We prove these properties in order:
\begin{enumerate}
	\item For any irreducible variety $\V{X}$ we have $\V{X}(\FF) \sub \F(\V{X})$ (since $\Frob$ is an automorphism on $\FF^n$) and $\V{X}(\FF) \sub \Si(\V{X})$ (by construction), which implies
	$\V{X}(\FF) \sub \Op(\V{X})$ and thus $\V{V}(\FF) \sub \Op(\V{V})$ for arbitrary $\V{V}$.
	\item First we prove the degree bounds $\deg(\F(\V{X})) \le \f_{\V{X}}(\deg\V{X})$ and $\deg(\Si(\V{X})) \le \f_{\V{X}}(\deg\V{X})$
	for any irreducible variety $\V{X} \sub \ker\sl{T}$.
	For $\F$, 
	note that the Frobenius automorphism $\Frob$ maps hyperplanes to hyperplanes. As a consequence, for any irreducible variety $\V{X} \sub \overline{\FF}^n$, the variety $\Frob(\V{X})$ is isomorphic to $\V{X}$ and has the same degree; indeed, for $L\sub \overline{\FF}^n$ a generic linear subspace of dimension $\codim\V{X}$, 
	$$\deg(\Frob(\V{X})) = |\Frob(\V{X}) \cap L| = |\V{X} \cap \Frob^{-1}(L)| = \deg(\V{X}).$$
	Thus, by B\'{e}zout's bound in Theorem~\ref{fact:Bezout}, 
	$$\deg(\F(\V{X})) \le \deg(\V{X} \cap \Frob\V{X}) \le \deg(\V{X}) \cdot \deg(\Frob\V{X}) 
	= \deg(\V{X})^2 
	\le \f_{\V{X}}(\deg\V{X})$$
	(if $\codim\V{X}=0$ there is nothing to prove).
	For $\Si(\V{X})$, 
	we have by Lemma~\ref{lemma:Sing-deg-bd} and Corollary~\ref{coro:Sing-deg-bd} that
	\begin{align*}
		\deg\Si(\V{X}) &\le \deg S(\V{X}) + \deg S_M(\V{X}) \le 
		\codim\V{X}(\deg\V{X}-1)\deg\V{X} + \codim\V{X}(k-2)\deg\V{X} \\
		&\le \codim\V{X}(\deg\V{X}+k)\deg\V{X}
		\le  \codim\V{X} \cdot 2k(\deg\V{X})^2
		= \f_{\V{X}}(\deg\V{X}),
	\end{align*}
	using, to bound $\deg S_M(\V{X})$, the fact that each entry in the polynomial matrix $M=\D\sl{T}$ is a polynomial of degree $k-2$, as well as using the bound $\max_{\B{x} \in \V{X}} \rank M(\B{x}) \le \codim\V{X}$ that follows from the assumption $\V{X} \sub \ker\sl{T}$ (recall~(\ref{eq:T-vs-D})),
	and using, for the last inequality, the bound $(x+y)/2 \le \max\{x,y\} \le xy$ for reals $x,y\ge 1$. 
	Now, to prove the desired degree bound for $\Op(\V{V})$, 
	first note that for an irreducible variety $\V{X} \sub \ker\sl{T}$ we have, by construction and the above bounds, that
	$\deg \Op(\V{X}) \le \f_{\V{X}}(\deg\V{X})$.
	An arbitrary variety $\V{V} \sub \ker\sl{T}$ requires more care;
	first, we prove the bound $\d(\Op(\V{V}^\top)) \le \f_{\V{V}}(\d(\V{V}))$; 
	it follows from
	\begin{align*}
		\d(\Op(\V{V}^\top))^{\codim\Op(\V{V}^\top)} 
		&\le \deg\Op(\V{V}^\top)
		= \sum_\V{X} \deg\Op(\V{X})
		\le \sum_\V{X} \f_{\V{X}}(\deg\V{X})
		= \sum_\V{X} \f_{\V{V}}(\deg\V{X})\\
		&\le \f_{\V{V}}(\deg(\V{V}^\top))
		\le \f_{\V{V}}(\d(\V{V})^{\codim\V{V}})
		\le \f_{\V{V}}(\d(\V{V}))^{\codim\V{V}}
	\end{align*}
	where the first inequality uses Property~(\ref{item:prop-d-bounds}) of $\d$, 
	the sum is over the irreducible components $\V{X}$ of $\V{V}^\top$,
	and the penultimate inequality uses Property~(\ref{item:prop-d-bounds}) again.
	We deduce that
	$$\d(\Op(\V{V})) = \d(\Op(\V{V}^\top) \cup \V{V}_\bot)
	\le \d(\Op(\V{V}^\top)) + \d(\V{V}_\bot)
	\le \f_{\V{V}}(\d(\V{V})) + \d(\V{V})
	\le 2\f_{\V{V}}(\d(\V{V}))$$
	where the first inequality uses Property~(\ref{item:prop-d-union}) of $\d$, 
	and the second inequality follows from the bound above as well as the definition of $\d$.
	\item Since $\Op$ maps varieties to subvarieties thereof, $\dim\Op(\V{V})=\dim\V{V}$ with $\V{V}\neq\emptyset$ implies that there is an irreducible component $\V{X}$ of $\V{V}^\top$ such that $\Op(\V{X}) = \V{X}$, which in turn implies that $\F(\V{X})=\V{X}$ and $\Si(\V{X})=\V{X}$.
	The former means that $\V{X}$ is defined over $\FF$ by Fact~\ref{lemma:Frob}, 
	and the latter means that $\V{X}(\FF) \nsubseteq S^+(\V{X})$
	and so $\V{X}(\FF) \nsubseteq \Sing\V{X} \cup \Sing_M\V{X}$ (recall $\Sing\V{X} \cup \Sing_M\V{X} \sub S^+(\V{X})$ by Lemma~\ref{lemma:Sing-deg-bd} and Corollary~\ref{coro:Sing-deg-bd}).
	Thus, $\V{X}$ satisfies the required conditions.
		%
\end{enumerate}

%

Now, let $\V{Z}=\Op^{(t)}(\ker\sl{T})$ be the $t$-times iteration of $\Op$ on $\ker\sl{T}$, where $t \ge 0$ is the smallest integer such that $\dim\Op^{(t+1)}(\ker\sl{T}) = \dim\Op^{(t)}(\ker\sl{T})$.
Note that $\V{Z} \neq \emptyset$ since, by Property~(\ref{item:prop-preserve}), we have $\B{0} \in \ker\sl{T}(\FF) \sub \V{Z}$.
%
Apply Theorem~\ref{theo:PR-bound} on 
the irreducible component of $\V{Z}^\top$ guaranteed by Property~(\ref{item:prop-good}).
This yields the following upper bound on $\PR(T)$:
\begin{equation}\label{eq:PR-bound}
\PR(T) \le (2^{k-1}-1)\codim\V{Z}.
\end{equation}
Next, we prove a lower bound on $\AR(T)$.
Observe that for any variety $\V{V}$, if $\ker\sl{T}(\FF) \sub \V{V}$ 
and $\d(\V{V}) < |\FF|$ then
\begin{align}
\begin{split}\label{eq:AR-deg}
\AR(T) &= n - \log_{|\FF|}|\V{\ker\sl{T}}(\FF)|
\ge n - \log_{|\FF|}|\V{V}(\FF)|
= \codim\V{V} - \log_{|\FF|}\frac{|\V{V}(\FF)|}{|\FF|^{\dim\V{V}}}\\
&\ge \codim\V{V}\big(1 - \log_{|\FF|}\d(\V{V})\big) + \log_{|\FF|}\Big(1-\frac{\d(\V{V})}{|\FF|}\Big)
\end{split}
\end{align}
where for the last inequality we apply Lemma~\ref{lemma:SZ-d}. 
%
We first claim that for any $C \ge 1$, 
\begin{align}\label{eq:AR-F-bd}
	\begin{split}
	&\text{if } \ker\sl{T}(\FF) \sub \V{V} \text{ and } 
	|\FF| \ge 4C\cdot\d(\V{V})^{2(C\cdot\AR(T)+1)}\\
	&\text{ then } \AR(T) \ge \codim\V{V} - 1/C.
	\end{split}
\end{align}
If $\log_{|\FF|}(1-\frac{\d(\V{V})}{|\FF|}) \ge -\frac{1}{2C}$
and $1-\log_{|\FF|}\d(\V{V}) \ge (\AR(T)+\frac{1}{2C})/(\AR(T)+\frac{1}{C})$ then~(\ref{eq:AR-deg}) immediately gives $\AR(T)+\frac{1}{C} \ge \codim\V{V}$.
The former condition follows from $|\FF| \ge 4C\cdot\d(\V{V})$ 
(using the inequality $\log_2(1-x) \ge -2x$ for $0 \le x \le 1/2$ with $x=\frac{1}{4C}$),
and the latter condition is equivalent to $|\FF| \ge \d(\V{V})^{2(C\AR(T)+1)}$,
thus verifying~(\ref{eq:AR-F-bd}).

%

%
%
%
%
%
%
%
%
%
%
%

Let $\V{Z}_i = \Op^{(i)}(\ker\sl{T})$ for each $i$.
For every $0 \le i < t$,
\begin{equation}\label{eq:Z-prop}
\dim(\V{Z}_{i+1}) < \dim(\V{Z}_{i}) \,\text{ and }\,
\d(\V{Z}_{i+1}) \le 2\f_{\V{Z}_i}(\d(\V{Z}_{i}))
\,\text{ and }\,
\ker\sl{T}(\FF) \sub \V{Z}_i ,
\end{equation}
where the first item follows immediately from the construction of $\V{Z}$,
the second item from Property~(\ref{item:prop-deg}) of $\Op$,
and the third item from Property~(\ref{item:prop-preserve}).
Let $A=\ceil{\AR(T)}$ and $C \ge 1$, 
and suppose
\begin{equation}\label{eq:F-size}
|\FF| \ge k^{CA^2 2^{A+8}}.
\end{equation}
We claim that for every $0 \le i \le t$,
$$\AR(T) \ge \codim\V{Z}_i - 1/C .$$
We proceed by induction on $i$. 
For the base case $i=0$, first note that $\d(\V{Z}_0) = \d(\ker\sl{T}) \le k-1$, using the fact that $\ker\sl{T}$ is cut out by the $(k-1)$-linear polynomials in the entries of $\sl{T}$ and, importantly, Property~(\ref{item:prop-d-poly}) of $\d$.
Thus, the induction base follows from~(\ref{eq:AR-F-bd}) and~(\ref{eq:F-size}).
%
For the induction step assume $\AR(T) \ge \codim\V{Z}_i - 1/C$ (which implies $\codim\V{Z}_i\le \AR(T)+1$), and observe that
$$\d(\V{Z}_{i+1})
\le 4k \cdot 2A \cdot \d(\V{Z}_i)^2
\le (8kA\cdot\d(\V{Z}_{i+1-j}))^{2^j}
\le (8k^2 A)^{2^{i+1}} 
\le k^{A 2^{A+5}} 
\le (|\FF|/4C)^{\frac{1}{2(CA+1)}}$$
where the first inequality follow from the second item of~(\ref{eq:Z-prop}), 
the second inequality applies for all $1 \le j \le i+1$,
the third inequality takes $j=i+1$ and uses again $\d(\V{Z}_0) \le k$,
the fourth inequality uses $i \le \codim(\V{Z}_i)$ $(\le A+1)$ which follows from the first item of~(\ref{eq:Z-prop}), 
and the last inequality uses~(\ref{eq:F-size}).
%
%
%
%
Thus, again by~(\ref{eq:AR-F-bd}),
$\AR(T) \ge \codim\V{Z}_{i+1} - 1/C$, 
completing the induction step.
We obtain, for $i=t$, that
\begin{equation}\label{eq:AR-bound}
\AR(T) \ge \codim\V{Z} - 1/C . 
\end{equation}
Finally, combining~(\ref{eq:PR-bound}) and~(\ref{eq:AR-bound}), we deduce that
$$\PR(T) \le (2^{k-1}-1)(\AR(T)+1/C)$$
provided~(\ref{eq:F-size}) holds.
Taking $C=2^{k-1}-1$ (so $|\FF| \ge 2^{2^{O(\AR(T)+k)}}$) completes the proof.
\end{proof}

Next we prove a that quantitative improvement of Theorem~\ref{theo:main} already yields an almost linear bound for $\PR$ in terms of $\AR$ that holds over every finite field.

\begin{proof}[Proof of Corollary~\ref{coro:main}]
	Let $\FF$ be any finite field, let
	$\KK$ be the extension field of $\FF$ of degree $\ell$ (so $|\KK|=|\FF|^\ell$),  
	and let $T$ be a $k$-tensor over $\FF$.
	We next show that $\PR(T) \le \ell\PR_\KK(T)$, 
	and that $\AR_\KK(T) \le \ell^k\AR(T)$.
	This would complete the proof since applying Theorem~\ref{theo:main} on $T$, viewed as a tensor over the field $\KK$ of degree  $\ell=\lceil{\log_{|\FF|}F(\AR_\KK(T),k)}\rceil$, yields that
	$\PR(T) \le O_k(\ell^{k+1}(\AR(T)+1))$.
	Therefore, if Theorem~\ref{theo:main} holds with $F(r,k) \le \exp(O_k(r^{o(1)}))$ then we have
	$\PR(T) \le O_k(\AR(T)^{1+o(1)})$ over every finite field.
	
	Henceforth write $\KK=\FF[\alpha]$, 
	and let $\Tr \colon \KK\to\FF$ be the $\FF$-linear function satisfying $\Tr(x)=x$ for every $x \in \FF$
	(i.e., $\Tr(1)=1$)
	given by  
	$\Tr(x) = \Tr_{\KK/\FF}(x_0x)$, 
	where $\Tr_{\KK/\FF}(x)=\sum_{i=0}^{\ell-1} x^{|\FF|^i} \colon \KK \to \FF$ is the trace of $\KK$ on $\FF$ and
	$x_0 \in \KK$ satisfies $\Tr_{\KK/\FF}(x_0)=1$ (as $\Tr_{\KK/\FF}$ is surjective).
	We naturally extend $\Tr$ to $\KK$-polynomials, $\Tr\colon\KK[\B{x}]\to\FF[\B{x}]$, by acting on the coefficients.
	We first prove the inequality $\PR(T) \le \ell\PR_\KK(T)$.
	Note that if $p,q$ are $\KK$-polynomials, we can write 
	$p=\sum_{t=0}^{\ell-1} \alpha^t p^{(t)}$ 
	and $q=\sum_{t=0}^{\ell-1} \alpha^t q^{(t)}$,
	where for each $t$ we have that $p^{(t)}$ and $q^{(t)}$ are $\FF$-polynomials whose support is contained in the support of $p$ and $q$, respectively. 
	Then $\Tr(pq) = 
	\Tr(\sum_{t,s=0}^{\ell-1} \alpha^{t+s}p^{(t)}q^{(s)}) 
	=\sum_{t=0}^{\ell-1} p^{(t)}q'^{(t)}$ with $q'^{(t)}=\sum_{s=0}^{\ell-1} \Tr(\alpha^{t+s}) q^{(s)}$.
	Since $T=\Tr(T)$, and by the linearity of $\Tr$, the desired inequality $\PR(T) \le \ell\PR_\KK(T)$ follows.
	
	
	It remains to prove the inequality $\AR_\KK(T) \le \ell^k\AR(T)$.
	Note that if $\chi$ is a nontrivial additive character of $\FF$ then $\chi\circ\Tr$ is a nontrivial additive character of $\KK$.
	Therefore, by the definition of analytic rank, $\AR_\KK(T) \le \AR(\Tr(T'))$ where $T \colon (\FF^n)^k \to \FF$, and $T' \colon (\FF^{\ell n})^{k}\to\KK$ is given by
	$$T'((\B{x}^{(t)}_1)_{t=0}^{\ell-1},\ldots,(\B{x}^{(t)}_k)_{t=0}^{\ell-1}) 
	= T\Big(\sum_{t=0}^{\ell-1} \alpha^t \B{x}^{(t)}_1,\ldots,\sum_{t=0}^{\ell-1} \alpha^{t} \B{x}^{(t)}_k\Big).$$
	%
	Considering each monomial separately, we have 
	$$\AR_\KK(T) \le \AR(\Tr(T')) 
	= \AR\Big(\sum_{t_1,\ldots,t_k=0}^{\ell-1} \Tr(\alpha^{t_1+\cdots+t_k})T(\B{x}^{(t_1)}_1,\ldots,\B{x}^{(t_k)}_k)\Big)
	\le \ell^k \AR(T)$$ 
	using the sub-additivity of analytic rank~\cite{Lovett19}.
	This completes the proof.
\end{proof}

\subsection{Proof of a polynomial bound for the $d$~vs.~$d-1$ Gowers Inverse Conjecture}\label{subsec:Gowers-proof}

We finish by deducing Corollary~\ref{coro:Gowers}.
For the rest of this subsection $\FF$ is a finite field and
$\chi \colon \FF\to\Comp$ is a nontrivial additive character 
(so $\chi(y) = (e^{2\pi i/\ch(\FF)})^{\Tr(cy)}$ with $0 \neq c \in \FF$, where $\Tr$ here is the trace of $\FF$ on $\FF_{\ch(\FF)}$).
For any function $p \colon \FF^n \to \FF$, denote by $\Delta_{\B{v}}p(\B{x}) = p(\B{x}+\B{v})-p(\B{x})$ the \emph{additive derivative} of $p$ along $\V{v} \in \FF^n$.
%
For a polynomial $p\colon\FF^n\to\FF$ of degree $d$, consider the symmetric $d$-tensor $\widetilde{p} \colon (\FF^n)^d \to \FF$ given by (see~\cite{GowersWo11}, Lemma~2.4)
$$\widetilde{p}(\B{v}_1,\ldots,\B{v}_d) 
:= \Delta_{\B{v}_1}\cdots\Delta_{\B{v}_d} p
= \sum_{S \sub [d]} (-1)^{d-|S|}p\big(\sum_{i \in S} \B{v}_i\big) .$$
%
%
For a polynomial $p \colon \FF^n\to\FF$, its \emph{rank} $\rank(p)$ is defined as the smallest number $r$ of polynomials $q_1,\ldots,q_r \colon \FF^n\to\FF$ with $\deg(q_i)<\deg(p)$ such that $p=\phi(q_1,\ldots,q_r)$ for some function $\phi\colon\FF^r\to\FF$.
We have the following standard properties (see, e.g.,~\cite{GreenTao09,Janzer19}), where $p\colon\FF^n\to\FF$ is of degree $d$ and $f(\B{x})=\chi(p(\B{x}))$:
%
\begin{enumerate}
	\item\label{item:Gowers1} $\norm{f}_{U^d}^{2^d} = |\FF|^{-\AR(\widetilde{p})}$,
	\item\label{item:Gowers2} If $d<\ch(\FF)$ then 
	$\rank(p) \le 2\PR(\widetilde{p})+1$.
\end{enumerate}
The first item follows from $\Delta^*_{\B{v}_1}\cdots\Delta^*_{\B{v}_d} \chi(p(\B{x})) = \chi(\Delta_{\B{v}_1}\cdots\Delta_{\B{v}_d} p(\B{x}))
= \chi(\widetilde{p}(\B{v}_1,\ldots,\B{v}_d))$.
The second item follows from the Taylor expansion 
$p(\B{x}) = (d!)^{-1}\widetilde{p}(\B{x},\ldots,\B{x}) + q(\B{x})$
with $\deg q < d$, 
as $(d!)^{-1} \in \FF$,
which implies $\rank(p) \le \rank(\widetilde{p})+1 \le 2\PR(\widetilde{p})+1$.
We can now easily obtain, from Theorem~\ref{theo:main}, the polynomial bound  
$\norm{f}_{u^d} \ge \norm{f}_{U^d}^{8^d}$ 
(recall~(\ref{eq:Gowers-norms})).
\begin{proof}[Proof of Corollary~\ref{coro:Gowers}]
	We assume $d \ge 2$, since for $d=1$ the statement is trivially true as $\norm{f}_{U^1} = |\Ex_{\B{x}\in\FF^n} f(\B{x})| \le \norm{f}_{u^1}$.
	It suffices to prove that
	\begin{equation}\label{eq:poly-ranks-bd}
	\rank(p) \le 2^{d+1}\AR(\widetilde{p}).
	\end{equation}
	Indeed,~(\ref{eq:poly-ranks-bd}) would imply
	$$\norm{f}_{u^d} \ge |\FF|^{-\rank(p)} 
	\ge |\FF|^{-2^{d+1}\AR(\widetilde{p})}
	= \norm{f}_{U^d}^{2^d2^{d+1}}
	\ge \norm{f}_{U^d}^{8^d}$$
	where the first inequality follows by averaging using the Fourier expansion of $f$,\footnote{
		Observe that every additive character $\psi \colon \FF^r \to \mathbb{C}$ is of the form $\psi(\B{y})=\chi(\boldsymbol\alpha \cdot \B{y})$ for some $\boldsymbol\alpha \in \FF^r$; 
		indeed, $\psi(\B{y}) = \omega^{\Tr(\boldsymbol\gamma \cdot \B{y})}$ for some $\boldsymbol\gamma \in \FF^r$, where $\omega = e^{2\pi i/\ch(\FF)}$, 
		so if $\chi(y) = \omega^{\Tr(cy)}$ $(c \neq 0)$ then
		$\psi(\B{y}) = \chi(\boldsymbol\alpha\cdot\B{y})$ with $\boldsymbol\alpha = c^{-1}\boldsymbol\gamma$. 
		The averaging argument is roughly as follows (see Theorem~1.4 in~\cite{GreenTao09} or Theorem~1.6 in~\cite{Janzer19}): write $f=\phi(q_1,\ldots,q_r)$ and expand 
		$\phi(\B{y}) = \sum_\psi \hat{\phi}(\psi)\psi(\B{y})$ where the sum is over the characters of $\FF^r$; 
		then $f = \sum_\psi \hat{\phi}(\psi)\psi(q_1,\ldots,q_r)$, 
		and so $1 = \Ex_{\B{x} \in \FF^n} |f(\B{x})|^2 
		= \sum_\psi \overline{\hat{\phi}(\psi)}\Ex_{\B{x} \in \FF^n} f(\B{x})\overline{\psi(q_1(\B{x}),\ldots,q_r(\B{x}))}$ by rearranging;
		by averaging, $|\Ex_{\B{x} \in \FF^n} f(\B{x})\overline{\psi(q_1(\B{x}),\ldots,q_r(\B{x}))}| \ge |\FF|^{-r}$
		for some character $\psi$, and we are done by the observation above.
	}
	and the equality is Property~(\ref{item:Gowers1}).
%
	To prove~(\ref{eq:poly-ranks-bd}), we have
	$$\rank(p) \le 2\PR(\widetilde{p})+1 
	\le (2^d-2)\AR(\widetilde{p})+3$$
	where the first inequality is Property~(\ref{item:Gowers2}) and the second inequality applies Theorem~\ref{theo:main}, 
	which means we assume $\ch(\FF)>d$ and $|\FF| \ge F(\AR(\widetilde{p}),d)
	=: F'(1/\norm{f}_{U^d},d)$.
	However, we must remove the additive term from the right hand side 
	to obtain the bound on $\norm{f}_{u^d}$ in the statement, which does not involve $|\FF|$. 
	We show that this can be done since $|\FF|$ is large enough.
	By the Schwartz-Zippel lemma, any nonzero $\FF$-polynomial $q\colon\FF^m\to\FF$ of degree $k$ has at most $(k/|\FF|)\cdot|\FF|^m$ zeros.
	It follow that any nonzero $d$-tensor $T \colon (\FF^n)^d \to \FF$ ($d \ge 2$) satisfies $|\ker\sl{T}(\FF)| \le ((d-1)/|\FF|)\cdot|\FF|^{(d-1)n}$; 
	indeed, this holds for each nonzero polynomial of the slicing polynomial map $\sl{T} \colon (\FF^n)^{d-1}\to\FF^n$, which is a polynomial of degree $d-1$.
	We deduce that 
	$$\AR(\widetilde{p}) = \log_{|\FF|}(|\FF|^{(d-1)n}/|\ker\sl{\widetilde{p}}(\FF)|) \ge 1 - \log_{|\FF|}(d-1) 
	\ge 1/2$$ 
	where the last inequality uses the assumption that $|\FF|$ is large enough as a function of $d$.
	This gives~(\ref{eq:poly-ranks-bd})
	via the bound 
	$(2^d-2)\AR(\widetilde{p})+3 \le 2^{d+1}\AR(\widetilde{p})$
	(equivalently, $3 \le (2^d+2)\AR(\widetilde{p})$),
	completing the proof.
	%
	%
	%
	%
\end{proof}

We finally note that an easy corollary of Theorem~\ref{theo:main} 
is the inverse theorem for analytic rank vs.\ rank for general polynomials over large fields: for $p\colon\FF^n\to\FF$ a polynomial of degree $d$, its rank $\rank(p)$ and analytic rank $\AR_\chi(p):=-\log_{|\FF|}|\Exp_{\B{x}\in\FF^n} \chi(p(\B{x}))|$ satisfy
$$\rank(p) \le 4^{d+1}\AR_\chi(p)$$
if $\ch(\FF)>d$ and $|\FF|$ is large enough as a function of $\AR(p)$ and $d$.
This is immediate from~(\ref{eq:poly-ranks-bd}) and $\AR(\widetilde{p}) \le 2^d\AR_\chi(p)$; the latter inequality follows from Property~(\ref{item:Gowers1}) with $f(\B{x})=\chi(p(\B{x}))$ together with monotonicity $\norm{f}_{U^d} \ge \norm{f}_{U^1}$ (e.g.,~\cite{GreenTao08} Equation~(1.2)) and the identity $\norm{f}_{U^1}^2 = |\Ex_{\B{v},\B{x}\in\FF^n} \Delta^*_{\B{v}} f(\B{x})| = |\Ex_{\B{x}\in\FF^n} f(\B{x})|^2 = (|\FF|^{-\AR_\chi(p)})^2$ (see~\cite{GreenTao09,Janzer19}).
For large fields $\FF$ as above, 
this gives a proof of, e.g., Theorem~1.8 of Green and Tao~\cite{GreenTao09}, or Conjecture 1.16 of Tao and Ziegler~\cite{TaoZi12}, with essentially an optimal bound.

\subsection*{Acknowledgments}
We thank Zeyu Guo, Aise Johan de Jong, David Kazhdan, Will Sawin, Avi Wigderson, Daniel Zhu, Tamar Ziegler, and Jeroen Zuiddam for helpful discussions.
We thank the anonymous referees for their careful reading and useful comments.

\end{document}